\newtheorem{theorem}{Theorem}[section] 
\newtheorem{lemma}[theorem]{Lemma}
\newtheorem{proposition}[theorem]{Proposition}
\newtheorem{corollary}[theorem]{Corollary}
\newtheorem{definition}[theorem]{Definition}
\newcommand{\CC}{\mathbb C}
\newcommand{\HH}{\mathbb H}
\newcommand{\NN}{\mathbb N}
\newcommand{\PP}{\mathbb P}
\newcommand{\RR}{\mathbb R}
\newcommand{\ZZ}{\mathbb Z}
\newcommand{\cB}{\mathcal B}
\newcommand{\cD}{\mathcal D}
\newcommand{\cH}{\mathcal H}
\newcommand{\cL}{\mathcal L}
\newcommand{\cX}{\mathcal X}
\newcommand{\cY}{\mathcal Y}
\newcommand{\cZ}{\mathcal Z}
\newcommand{\Prod}{\prod\limits}
\newcommand{\Sp}{\mathop{\mathrm {Sp}}\nolimits}
\newcommand{\SL}{\mathop{\mathrm {SL}}\nolimits}
\newcommand{\Orth}{\mathop{\null\mathrm {O}}\nolimits}
\newcommand{\Lift}{\mathop{\mathrm {Lift}}\nolimits}
\renewcommand{\Im}{\mathop{\mathrm {Im}}\nolimits}
\newcommand{\rank}{\mathop{\mathrm {rank}}\nolimits}
\newcommand{\id}{\mathop{\mathrm {id}}\nolimits}
\newcommand{\gz}{\mathfrak z}
\newcommand{\gG}{\mathfrak G}
\newcommand{\mg}{\mathfrak g}
\newcommand{\latt}[1]{{\langle{#1}\rangle}}
\newcommand{\Kthree}{\mathop{\mathrm {K3}}\nolimits}
\begin{document}

\title{$24$ faces of the Borcherds modular form $\Phi_{12}$}
\author{Valery Gritsenko}
\date{}
\maketitle
\begin{abstract}
The fake monster Lie algebra is determined by the Borcherds function 
$\Phi_{12}$ which is the reflective 
modular form of the minimal possible weight  with respect to $\Orth^+(II_{2,26})$.
We prove that the  first non-zero Fourier--Jacobi coefficient of $\Phi_{12}$
in any of $23$ Niemeier cusps is equal to the Weyl--Kac denominator function 
of the affine Lie algebra of the root system of the corresponding Niemeier lattice.
This is an automorphic answer (in the case of the fake monster Lie algebra)
on the old question of Frenkel and Feingold (1983)
about possible relations between  hyperbolic Kac--Moody algebras,
Siegel modular forms and affine Lie algebras.
\end{abstract}

\bigskip

\section{The Borcherds modular form $\Phi_{12}$ and Lorentzian  and affine Kac--Moody algebras}
\bigskip
\thispagestyle{empty}

In 1983, I. Frenkel and A. Feigold posed a question about 
possible relations between the simplest hyperbolic Kac--Moody algebra,
Siegel modular forms and affine Lie algebras (see \cite{FF}).
A Siegel modular form of genus $2$ can be considered as 
a  modular form on the  orthogonal 
group $\Orth(2,3)$. The Weyl--Kac denominator functions of affine 
Lie algebras are Jacobi modular forms similar to Fourier--Jacobi coefficients 
of  modular forms on $\Orth(2,n)$.

In \cite{GN2} we constructed {\it the automorphic correction} of the simplest 
hyperbolic Kac--Moody algebra, i.e. a generalized hyperbolic Kac--Moody super Lie algebra
with the same (hyperbolic) real simple roots 
whose Weyl--Kac--Borcherds denominator function 
is the classical Igusa cusp form $\Psi_{35}$ of weight $35$.
($\Psi_{35}$ is essentially the unique Siegel modular form of odd weight.) 
The multiplicities of positive roots of this Lorentzian Kac--Moody algebra 
are given by the Fourier coefficients  
of a  weakly holomorphic Jacobi form of weight $0$.
The first non-zero Fourier--Jacobi coefficient of $\Psi_{35}$
is equal to the Jacobi modular form $\eta(\tau)^{69}\vartheta(\tau, 2z)$
of weight $35$ and index $2$ which is not a denominator function
of affine type.
In the class of Lorentzian Kac--Moody algebras of hyperbolic rank $3$
classified in \cite{GN3}--\cite{GN5}
there exists an  algebra determined by the even  Siegel  theta-series 
$\Delta_{1/2}$ whose first Fourier--Jacobi coefficient is equal to 
$\vartheta(\tau, z)$ which is 
the denominator function of the simplest affine Lie algebra $\hat \mg(A_1)$.

In this paper we analyze  the fake monster Lie algebra discovered  
by R.~Borcherds in
1995 (see \cite{B1}). The algebra is determined by the Borcherds
modular form $\Phi_{12}$ of weight $12$ with respect to $\Orth^+(II_{2,26})$
where $II_{2,26}$ is the even integral lattice of signature $(2,26)$.
The modular form $\Phi_{12}$ has $24$ different Fourier--Jacobi expansions
in the $24$ one-dimensional cusps ($23$ Niemeier cusps and the Leech cusp).
We prove that $\Phi_{12}$ vanishes at all $23$ Niemeier cusps
and  its   first non-zero  Fourier--Jacobi coefficient 
is equal to the Weyl--Kac denominator function 
of the affine Lie algebra of the root system of the corresponding Niemeier lattice.
This result  is an automorphic answer
on the question of Frenkel and Feingold  in the case of the fake monster Lie algebra.

Lorentzian Kac--Moody algebras are hyperbolic analogue
of the finite dimensional and affine Lie algebras
(see \cite{B1}--\cite{B2} and \cite{GN1}--\cite{GN5}).
A Lorentzian Kac--Moody algebra is graded  by a hyperbolic root lattice $L_1$,
its Weyl group is a hyperbolic  reflection group 
and its Weyl--Kac--Borcherds denominator function is 
a modular form with respect to an orthogonal group  of signature
$(2,n)$. 

The famous example is the {\it fake monster Lie algebra} $\gG_\Lambda$
with  the root lattice $II_{1,25}\cong U\oplus \Lambda_{24}(-1)$  
of signature $(1,25)$, where 
$U\cong
\begin{pmatrix}
0&1\\1&0
\end{pmatrix}$  is the hyperbolic plane 
and $\Lambda_{24}(-1)$ is the rescaled Leech lattice,
the negative definite even unimodular lattice of rank $24$ without vectors
of square $2$. 
One  can consider the Borcherds modular form 
$$
\Phi_{12} \in M_{12}(\Orth^+(II_{2,26}),\,\det)
$$
as the generating function of $\gG_\Lambda$ because
it contains the full information on the generators and relations of the algebra
and on the  multiplicities of all positive roots. 
The divisor of $\Phi_{12}$ is the union of all rational quadratic divisors
determined by $-2$-vectors in $II_{2,26}$.

Let $L_2$ be an even integral   lattice with a
quadratic form of signature $(2,n)$,
$$
\cD(L_2)=\{[\cZ] \in \PP(L_2\otimes \CC) \mid
  (\cZ,\cZ)=0,\ (\cZ,\overline{\cZ})>0\}^+
$$
be the associated $n$-dimensional bounded symmetric  Hermitian domain  of type $IV$
(here $+$ denotes one of its two connected components).
We denote by $\Orth^+(L_2)$ the index $2$ subgroup of the integral orthogonal group 
$\Orth(L_2)$ preserving $\cD(L_2)$.
The domain contains 
the following   {\it rational quadratic divisors}
$$
\cD_v=\{[\cZ] \in \cD(L_2)\mid (\cZ,v)=0\}\cong \cD(v^\perp_{L_2})
 \quad {\rm where }\quad
v\in L_2^\vee,\ (v,v)<0
$$
and $L_2^\vee$ is the dual lattice.
The modular quotient  $\Gamma\setminus \cD(L_2)$ where $\Gamma$ is a subgroup of finite
index in $\Orth^+(L_2)$ is a quasi-projective variety. Its Baily--Borel compactification
contains only boundary components of dimension $0$ and $1$ (see \cite{BB}).

\begin{lemma}\label{lem-BBC}
The Baily--Borel compactification of the  quasi-projective modular variety  
 $\Orth^+(II_{2,26})\setminus \cD(II_{2,26})$ 
 is a bouquet of $24$ modular curves $\SL_2(\ZZ)\setminus \HH$ 
 with the common zero-dimensional cusp.
\end{lemma}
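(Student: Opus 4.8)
The plan is to enumerate the rational boundary components of the Baily--Borel compactification $\overline{\cX}$ of $\cX=\Orth^+(II_{2,26})\setminus\cD(II_{2,26})$ and then to see how they fit together. By \cite{BB} these come only in dimensions $0$ and $1$: the $0$-dimensional ones are indexed by $\Orth^+(II_{2,26})$-orbits of primitive isotropic vectors of $II_{2,26}$, and the $1$-dimensional ones by orbits of primitive rank-$2$ isotropic sublattices (isotropic planes); throughout I would use the splitting $II_{2,26}\cong U\oplus U\oplus 3E_8(-1)$. First I would show there is a unique $0$-dimensional cusp: since $II_{2,26}$ is unimodular and contains $U\oplus U$, Eichler's transitivity theorem gives that $\Orth^+(II_{2,26})$ acts transitively on primitive isotropic vectors $v$ (each has a partner $w$ with $\langle v,w\rangle\cong U$, so $v^\perp/v$ is even unimodular of signature $(1,25)$, hence $\cong U\oplus\Lambda_{24}(-1)=II_{1,25}$, and all such configurations are conjugate). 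So $\overline{\cX}$ has exactly one zero-dimensional boundary point.

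Next I would count the $1$-dimensional cusps. For a primitive isotropic plane $E$, splitting off $U$ twice yields $II_{2,26}\cong U\oplus U\oplus K_E$ with $E$ a standard isotropic plane inside $U\oplus U$ and $K_E:=E^\perp/E$ a negative-definite even unimodular lattice of rank $24$. I would then argue that $E\mapsto K_E$ descends to a bijection from $\Orth^+(II_{2,26})$-orbits of isotropic planes onto isometry classes of such lattices: every rank-$24$ negative-definite even unimodular $K$ occurs (take $U\oplus U\oplus K$), and any isometry $K_E\xrightarrow{\sim}K_{E'}$ extends by the identity on $U\oplus U$ to an isometry of $II_{2,26}$ sending $E$ to $E'$, which one post-composes with a suitable element of $\Orth(U\oplus U)$ to land in $\Orth^+$. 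By Niemeier's classification there are exactly $24$ such lattices --- the $23$ lattices with roots and the Leech lattice --- so $\overline{\cX}$ has $24$ one-dimensional boundary components, one for each Niemeier lattice.

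Finally I would identify each boundary curve and see how the curves meet. The component $F_E$ attached to $E$ is a copy of the upper half-plane $\HH$, acted on by the image of $\Stab_{\Orth^+(II_{2,26})}(E)$ in $\Aut(E)\cong\GL_2(\ZZ)$ through $\PSL_2(\ZZ)$; using the $U\oplus U$ summand, every $g\in\SL_2(\ZZ)=\SL(E)$ extends (acting contragrediently on the complementary isotropic plane, trivially on $K_E$) to an element of that stabilizer, so the quotient is the full modular curve $\SL_2(\ZZ)\setminus\HH$. Its closure $\overline{F_E}$ in $\overline{\cX}$ is $\overline{\SL_2(\ZZ)\setminus\HH}\cong\PP^1$, which adjoins a single zero-dimensional cusp --- by the first step this is \emph{the} cusp of $\overline{\cX}$. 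Hence the $24$ curves $\overline{F_E}$ pairwise intersect exactly in that one point, which is the asserted bouquet. The step that is not purely formal is the lattice-theoretic one --- that $E^\perp/E$ is a complete invariant of an isotropic plane, and that the boundary modular group is the full $\SL_2(\ZZ)$; this rests on the cancellation properties of lattices containing $U\oplus U$ (Eichler, Nikulin) together with Niemeier's enumeration, and, along with the $\Orth$ versus $\Orth^+$ bookkeeping, is the point I expect to require the most care.
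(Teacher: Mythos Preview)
Your proof is correct and follows essentially the same approach as the paper's: both identify the boundary components via $\Orth^+(II_{2,26})$-orbits of primitive isotropic vectors and planes, use the invariant $E^\perp/E$ together with the Niemeier classification to count the $24$ one-dimensional cusps, and observe they all meet in the unique zero-dimensional cusp. Your version is in fact more detailed than the paper's terse argument (which defers the lattice-theoretic steps to \cite[\S 2.3]{GHS1}), explicitly justifying the bijection $E\mapsto E^\perp/E$ and the identification of each boundary curve with $\SL_2(\ZZ)\setminus\HH$.
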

\begin{proof}
According to \cite{BB} the zero- and one-dimensional boundary components
of $\Orth^+(II_{2,26})\setminus \cD(II_{2,26})$ 
correspond to the $\Orth^+(II_{2,26})$-orbits of the primitive 
isotropic vectors and totally isotropic planes respectively.
There exists only one orbit of primitive isotropic vectors in $II_{2,26}$,
i.e. only one zero-dimensional cusp.
Let $F<II_{2,26}$ be a primitive totally isotropic sublattice of rank $2$.
Then $L(-1)=F^\perp/F$ is a negative even unimodular lattice 
(see \cite[\S 2.3]{GHS1} for more details).
There are exactly $24$ classes of such lattices. They are 
the $23$ Niemeier lattices $N(R)$ uniquely determined by its root lattice $R$
of rank $24$
\begin{gather*}
3E_8,\ E_8\oplus D_{16},\  D_{24},\  2D_{12},\  3D_8,\ 
4D_6,\ 6D_4,\\
A_{24},\  2A_{12},\  3A_8,\  4A_6,\  6A_4,\  8A_3,\  12A_2,\  24A_1,\\
E_7\oplus A_{17},\ 2E_7\oplus D_{10}, \ 4E_6,\ E_6\oplus D_{7}\oplus A_{11},\\
A_{15}\oplus D_{9},\ 2A_{9}\oplus D_{6},\ 2A_{7}\oplus D_{5},\ 4A_{5}\oplus D_{4}
\end{gather*}
and the Leech lattice $\Lambda_{24}$ without roots
(see \cite[Chapter 18]{CS}). 
In other words, there are 
$24$ different models of the unique (up to isomorphism) even unimodular lattice
$II_{2,26}$  of signature $(2,26)$
corresponding to the  
 one-dimensional cusps
$$
II_{2,26}\cong U\oplus U\oplus N(-1)
$$
where $N(-1)$ is  the  negative definite Niemeier lattice $N=N(R)$.
(The bilinear form on $N(-1)$ is equal to  $-( .\,,\,. )_{N}$.)
The corresponding one dimensional boundary 
components 
are the modular curves $\SL_2(\ZZ)\setminus \HH$. All of them have the common
zero-dimensional cusp.
\end{proof}

In this paper we analyze the Fourier-Jacobi expansions of $\Phi_{12}$.
For this end we define   the  tube domain realizations  of $\cD(II_{2,26})$
corresponding to one-dimensional cusps. We construct  it in more general context.
Let 
$$
L_2=U\oplus U_1\oplus L(-1)
$$
where $U\cong U_1$ are two hyperbolic planes and $L$ is an even integral
positive definite lattice.
We fix a basis of the first hyperbolic plane $U=\ZZ e\oplus\ZZ f$:
$(e,f)=1$ and $e^2=f^2=0$ in $L_2$. Similarly  $U_1\cong \ZZ e_1\oplus\ZZ f_1$.
We choose a basis of $L_2$ of the form
$(e,e_1,\dots , f_1,f)$ where $\dots$ denote a basis of $L(-1)$.
The one dimensional boundary component related to $L(-1)$ is defined 
by the isotropic tower $\latt{f}\subset \latt{f,f_1}$.
Let $[\cZ]=[\cX+i\cY] \in \cD(L_2)$. Then
$(\cX,\cY)=0$, $(\cX,\cX)=(\cY,\cY)$ and $(\cZ,\overline{\cZ})=2(\cY,\cY)>0$.
Using the basis $\latt{e,f}_\ZZ=U$ we write $\cZ=z'e+\widetilde Z+zf$ with
$\widetilde Z\in L_1\otimes \CC$ where
$L_1=U_1\oplus L(-1)$ is a hyperbolic sublattice  of $L_2$. 
We note that $z\ne 0$. (If $z=0$, then  the real and imaginary parts of
$\widetilde Z$
are two orthogonal vectors of positive norm in the real hyperbolic space
$L_1\otimes \RR$ of signature $(1,n+1)$.)
Thus $[{}^t\cZ]=[(-\frac{1}2(Z,Z)_1, {}^tZ,1)]$
where $(.,.)_1$ is the hyperbolic  bilinear form in $L_1$.
 Using the basis $\latt{e_1,f_1}_\ZZ=U_1$
of the second hyperbolic plane  we see that
$\cD(L_2)$ is isomorphic to the tube domain
\begin{equation*}\label{HL}
\cH(L)=
\{Z=\left(\smallmatrix\omega\\ \mathfrak{Z}\\ \tau
\endsmallmatrix\right),\
\tau,\,\omega\in\HH,\  \gz\in L\otimes \CC,\,
(\Im Z, \Im Z)_1>0\}
\end{equation*}
where $(\Im Z, \Im Z)_1=2\Im(\omega)\Im(\tau)-(\Im(\gz),\Im(\gz))_L$.
We note that $\mathcal{H}(L)$ is the complexification 
of the connected light cone $V^+(L_1)=\{Y\in L_1\otimes \RR \ |\ (Y,Y)>0\}^+$.
We fix the isomorphism $[{\rm pr}]:\cH(L)\to\cD(L_2)$
defined
by the  $1$-dimensional  cusp $L$ fixed above
$$
Z=\left(\begin{smallmatrix}\omega\\ \gz \\ \tau\end{smallmatrix}\right)\
{\mapsto}\ {{\rm{pr}}(Z)}=
\left(\begin{smallmatrix}-\frac{1}{2}(Z,Z)_1\\
\omega\\ \gz \\
\tau\\1\end{smallmatrix}\right)
\mapsto\ \left[{\rm{pr}}(Z)\right]\in \cD(L_2).
$$
\smallskip

The root lattice of the fake monster Lie algebra $\gG_\Lambda$ 
is the hyperbolic lattice $\Lambda_{1,25}=U\oplus \Lambda_{24}(-1)$ 
where $\Lambda_{24}$
is the Leech lattice.
(Since we are working with modular forms we inverse the signature
of the root lattice.)
The Weyl group $W=W_{-2}(\Lambda_{1,25})$ of $\gG_\Lambda$ is generated by 
the reflections in all elements $v\in \Lambda_{1,25}$ with $v^2=-2$.
The group $W$ is discrete in the hyperbolic space
$\cL (\Lambda_{1,25})=V^+(\Lambda_{1,25})/\RR_{>0}$.
The (infinite) set $P$ of the real simple roots of $\gG_\Lambda$
contains the $-2$-vectors which 
are orthogonal to the walls of a fundamental chamber of $W$ in $\cL (S)$.
The set $P$ has a Weyl vector $\rho$. 
For example, one can take $\rho=e_1$ where $e_1$ is the first isotropic vector
of the basis for $U_1$. Then 
$
P=\{v \in \Lambda_{1,25}\ |\ (v,\,v)=-2\ {\rm and}\ (\rho,v)=1 \}
$. 

In \cite{B1} the Borcherds form $\Phi_{12}$ was determined in 
the Leech cusp of the modular group $\Orth^+(II_{2,26})$.
In order to describe the Fourier expansion of $\Phi_{12}$
(one has  only one Fourier expansion)
we  need the Ramanujan cusp form  $\Delta$ of the weight $12$
$$
\Delta(\tau)=q\prod_{n=1}^{\infty}(1-q^n)^{24}=
\sum_{m\ge 0}\tau(m)q^m,\qquad 
\Delta^{-1}(\tau)=\sum_{n\ge 0}p_{24}(n)q^{n-1}.
$$
Then one has   the following two expressions for 
the Borcherds modular form  (see \cite[\S 10]{B1}) 
$$
\Phi_{12}(Z)=\exp{(2\pi i (\rho,Z))}\prod_{\alpha\in \Delta_+}
{(1-\exp{(2\pi i (\alpha,Z))})^{p_{24}(1-(\alpha,\alpha)/2)}}=
$$
\begin{equation}\label{eq-PhiFE}
\sum_{w\in W}{\det(w)\sum_{m>0}{\tau (m)\exp{(2\pi i (w(m\rho),Z))}}}
\end{equation}
where $Z\in \cH(\Lambda_{24})$ and 
$\Delta_+=\{\alpha \in \Lambda_{1,25}\,|\,\alpha^2=-2\ {\rm and}
\ (\alpha,\,\rho)>0\}\cup
(\Lambda_{1,25}\cap \overline{V^+(\Lambda_{1,25})}-\{0\})$ 
is the set of positive roots 
of $\gG_\Lambda$.
The last identity between the sum over $W$ and the infinite product 
over $\Delta_+$  is {\it the Weyl--Kac--Borcherds identity}
for the fake monster Lie algebra $\gG_\Lambda$.

We note that the  Weyl--Kac denominator function of an affine Lie algebra
is a holomorphic  Jacobi modular  form (see \cite{Ka}, \cite{KP}).
The denominator identity for the simplest affine Lie 
algebra $\hat\mg(A_1)$ is reduced to the Jacobi triple
product identity
\begin{equation}\label{theta}
\vartheta(\tau,z)=\sum_{n\in\ZZ}\left(\frac{-4}{n}\right)q^{\frac{n^2}{8}}
r^{\frac{n}{2}}
=-q^{1/8}r^{-1/2}\prod_{n\geqslant 1}\,(1-q^{n-1} r)(1-q^n r^{-1})(1-q^n)
\end{equation}
where $q=e^{2\pi i \tau}$, $r=e^{2\pi i z}$, $z\in \CC$ and 
$\left(\frac{-4}{n}\right)$ is the Kronecker symbol.
The last  function is the  Jacobi theta-series
of characteristic $(\frac{1}2, \frac{1}2)$.
A question about relations
between the affine Lie algebras and the simplest hyperbolic Lie
algebra  with the Cartan matrix
$\begin{pmatrix} \ \,\,2&-2&\ \,\,0\\-2&\ \,\,2&-1\\\ \,\,0&-1&\ \,\,2\end{pmatrix}$ 
was posed  in 1983 in the paper 
of I.~Frenkel and A.~Feingold \cite{FF}. 
Theorem \ref {thm-PhiAA} below
is an automorphic answer
on a similar   question in the case of the fake monster Lie algebra.
\smallskip

The Fourier expansion (\ref{eq-PhiFE})
shows that the value of $\Phi_{12}(Z)$ at the one dimensional cusp
determined by the Leech lattice is equal to $\Delta(\tau)$
(see also the Fourier-Jacobi coefficient of index one 
in (\ref{BLe}) below).
The root lattice of $\Lambda_{24}$ is trivial.
A Niemeier lattice $N(R)$ is  uniquely determined by its root system $R$.
The list of possible $R$ was given in the proof of Lemma \ref{lem-BBC}.
We note that if $R=R_1\oplus\dots \oplus R_m$ is reducible, 
then the Coxeter numbers $h(R_i)$ of all components of the  Niemeier root system 
$R$ are the same.
We denote this number by $h(R)$.
\begin{theorem}\label{thm-PhiAA}
Let $N(R)$ be a Niemeier lattice with a non-empty root system $R$.
The Borcherds modular form $\Phi_{12}$ vanishes with order $h(R)$ along the 
one-dimensional boundary component determined by $N(R)$. 
The first non-zero Fourier-Jacobi coefficient of $\Phi_{12}$  in this cusp
is, up to a sign, the Weyl--Kac denominator function of the affine Lie algebra 
$\hat{\mathfrak g}(R)$
$$
\Phi_{12}(\tau, \gz, \omega)=\pm \eta(\tau)^{24}\prod_{v\in R_+}
\frac{\theta(\tau, (v,\gz))}{\eta(\tau)}\,e^{2\pi i h(R)\omega}+\dots
$$
where the product is taken over all positive roots of the finite root system $R$
of rank $24$. The sign in the formula depends on the choice of the positive roots  
in $R$. 
\end{theorem}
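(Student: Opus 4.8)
The plan is to extract the Fourier--Jacobi expansion of $\Phi_{12}$ at the Niemeier cusp $N=N(R)$ directly from the Weyl--Kac--Borcherds identity $(\ref{eq-PhiFE})$, read as an equality of holomorphic functions on $\cD(II_{2,26})$ and evaluated in the tube model $\cH(N)$. Using $\sum_{m>0}\tau(m)q^m=q\prod_{n\geq 1}(1-q^n)^{24}$ it becomes
\begin{equation*}
\Phi_{12}(Z)=\sum_{w\in W}\det(w)\,e^{2\pi i(w\rho,Z)}\prod_{n\geq 1}\bigl(1-e^{2\pi i n(w\rho,Z)}\bigr)^{24},\qquad W=W_{-2}(\Lambda_{1,25}),
\end{equation*}
with $\rho$ the Weyl vector of the fundamental chamber $\cW$ of $W$. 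Let $z\in\Lambda_{1,25}$ be the primitive isotropic vector defining the cusp (the first basis vector of $U_1$ in $\Lambda_{1,25}=U_1\oplus N(-1)$, so $z^\perp/z\cong N(-1)$); in the coordinates $(\omega,\gz,\tau)$ of $\cH(N)$ the exponent of $q_\omega:=e^{2\pi i\omega}$ carried by the $w$-summand is exactly $(w\rho,z)=(\rho,w^{-1}z)$. Since $\rho$ and all $w\rho$ lie in the closed forward cone these exponents are $\geq0$ and the product factors only raise them, so the order of vanishing of $\Phi_{12}$ along the cusp equals $\min_{x\in W\cdot z}(\rho,x)$ and the leading Fourier--Jacobi coefficient is the sum of the summands attaining this minimum.

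I would first compute the minimum. From $(\rho,s_r x)=(\rho,x)+(x,r)$ and $(\rho,r)=1$ for every simple root $r$ of $\cW$, reflection across a violated wall strictly decreases $(\rho,\cdot)$, so the minimum over $W\cdot z$ is attained at the unique representative $z_N\in\overline{\cW}\cap(W\cdot z)$. By Conway's description of $\Aut(II_{1,25})$ and of the cusps of $\cW$ (\cite{CS}, Ch.~27; see also \cite{B1}), $\overline{\cW}$ contains exactly $24$ primitive isotropic rays: the Leech vector $\rho$ itself, for which $\rho^\perp/\rho\cong\Lambda_{24}$ has no roots (so $\Phi_{12}$ does not vanish at the Leech cusp), and one vector $z_N$ for each Niemeier lattice $N(R)$, for which the simple roots of $\cW$ lying in $z_N^\perp$ form the extended Dynkin diagram $\widetilde R$ and $z_N=\sum a_i r_i$ is its primitive imaginary vector; hence $(\rho,z_N)=\sum a_i=h(R)$, which is the claimed order of vanishing.

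Next I would identify the leading coefficient. The summands reaching $q_\omega^{h(R)}$ are precisely those with $w^{-1}z=z_N$, i.e.\ $w\in w_0\Stab_W(z_N)$ for one fixed $w_0$ with $w_0 z_N=z$, and for these the factor $\prod_{n\geq 1}(1-e^{2\pi i n(w\rho,Z)})^{24}$ contributes $1$ because its $q_\omega$-valuation is $h(R)\geq 1$; thus the $q_\omega^{h(R)}$-coefficient is $\det(w_0)\sum_{\sigma\in\Stab_W(z_N)}\det(\sigma)\,e^{2\pi i\ell(\sigma\rho)}$, where $\ell(y)$ is the part of $(y,Z)$ free of $\omega$. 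Now $\Stab_W(z_N)$ is generated by the reflections in the $(-2)$-vectors $r\perp z_N$; these project modulo $z_N$ onto the finite Weyl group $W(R)$ acting on $z_N^\perp/z_N\cong N(-1)$, while the elements $s_{r+z_N}s_r$ give the translations by the root lattice $Q(R)$, so $\Stab_W(z_N)=W(R)\ltimes Q(R)=\widehat W(R)$ is the affine Weyl group and $\det$ restricts to its sign character. Under the dictionary $\rho\leftrightarrow\widehat\rho$, $z_N\leftrightarrow\delta$, $z_N^\perp/z_N\leftrightarrow$ the Cartan datum of $R$ --- consistent because $(\widehat\rho,\alpha_i)=1$ for each affine simple root and $(\widehat\rho,\delta)=\sum a_i=h(R)$ --- the displayed sum is recognised as the Weyl--Kac numerator of $\hat{\mathfrak g}(R)$, which by the affine denominator identity (\cite{Ka}, \cite{KP}) equals the Weyl--Kac denominator function; in its Jacobi realisation this is $\pm\eta(\tau)^{\rk R}\prod_{u\in R_+}\vartheta(\tau,(u,\gz))/\eta(\tau)$ (Macdonald's identities; the internal monomials in $q_\tau$ and $e^{2\pi i(\cdot,\gz)}$ cancel, equivalently because $\widehat\rho^2=0$), and this function is not identically zero, so the vanishing order is exactly $h(R)$. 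Since $\rk R=24$, both sides are Jacobi forms of weight $12$ and index $h(R)$, matching the theorem; the sign depends on the choice of $R_+$ because $\vartheta$ is odd in its second variable.

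The hard part will be the third step: establishing with the correct signs and normalisations both the isomorphism $\Stab_W(z_N)\cong\widehat W(R)$ and the dictionary relating $(\rho,z_N,z_N^\perp/z_N)$ to the affine weight data, so that the geometric Weyl-group sum is literally the Weyl--Kac numerator of $\hat{\mathfrak g}(R)$ --- this is exactly where Conway's analysis of the $(-2)$-reflection group of $II_{1,25}$ and its Niemeier cusps is indispensable. A secondary point is justifying the rearrangement of the infinite Weyl-group sum by powers of $q_\omega$, which follows from the absolute convergence of $(\ref{eq-PhiFE})$ on the tube domain; the same computation can alternatively be run on the infinite-product side of $(\ref{eq-PhiFE})$, whose $q_\omega$-leading coefficient is visibly a product over $z^\perp/z\cong N(-1)$ that Macdonald's identities turn into the same theta-product.
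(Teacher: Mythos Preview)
Your plan is sound and would succeed, but it is a genuinely different argument from the paper's. The paper never works in the Leech model, never invokes Conway's description of the cusps of the fundamental chamber of $W_{-2}(II_{1,25})$, and never appeals to the Macdonald identities. Instead it produces $\Phi_{12}$ directly in the Niemeier cusp as a Borcherds product: the weakly holomorphic Jacobi form
\[
\varphi_{0,N}=\Delta^{-1}\vartheta_{N(R)}\in J_{0,1}^{wh}(N(R))
\]
has $q^0$-part $24+\sum_{v\in R}e^{2\pi i(v,\gz)}$ and a single singular coefficient $f(-1,0)=1$, so Theorem~\ref{product} shows $\cB_{\varphi_{0,N}}$ has exactly the $(-2)$-divisor and hence equals $\pm\Phi_{12}$ by Koecher. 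The leading Fourier--Jacobi coefficient and its index $C$ are then read off mechanically from the Hecke-type representation~\eqref{eq-BHecke} and Corollary~\ref{cor-FJexp}: one gets $C=\frac{1}{48}\sum_{v\in R}(v,v)=|R|/24=h(R)$ and the theta-block $\eta^{24}\prod_{v\in R_+}\vartheta(\tau,(v,\gz))/\eta(\tau)$ with no further input.

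What your route buys is a structural explanation: you exhibit the affine denominator identity for $\hat{\mathfrak g}(R)$ as the sub-sum of the Borcherds--Weyl--Kac identity~\eqref{eq-PhiFE} over the parabolic subgroup $\Stab_W(z_N)\cong\widehat W(R)$, so the passage from the Lorentzian algebra to the affine one is visibly the restriction to a cusp stabilizer. The price is that you must import two substantial external results---Conway's identification of the Niemeier cusps of $\overline{\cW}$ with the deep holes (giving $(\rho,z_N)=\sum a_i=h(R)$ and the affine simple roots at $z_N$), and the Macdonald/Weyl--Kac identity to convert the $\widehat W(R)$-sum into the theta-product. The paper's approach is more self-contained and computational, needs neither of these, and moreover delivers the next few Fourier--Jacobi coefficients for free (see~\eqref{BNR}). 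It is also worth noting that the numerical identity $|R|/24=h(R)$, which you extract from Conway's hole picture, arises in the paper's argument from the $2$-design identity of Proposition~\ref{prop-JObst} applied to $\varphi_{0,N}$---an independent and rather elementary derivation.
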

We prove this theorem in \S 3 where we give explicit formulae for the first 
three Fourier--Jacobi coefficients at all one dimensional cusps
including the Leech cusp. 
\smallskip

\section{Jacobi modular forms in many variables}
\medskip

In this section we discuss Jacobi modular forms of orthogonal type.
In the definitions we follow \cite{G1}--\cite{G2}
where Jacobi forms were considered as modular forms with respect to
a parabolic subgroup of an orthogonal group of signature $(2,n)$.
We mentioned in \S 1 that  a one-dimensional cusp of $\cD(L_2)$
is defined by a maximal totally isotropic tower $\latt{f}\subset \latt{f,f_1}=F<L_2$.
A Jacobi modular form in our approach is a modular form with respect 
 to the integral parabolic subgroup $P_F<\Orth^+(L_2)$ 
(see \cite{G2} and \cite{CG2} for more details).
The regular part of  $P_F$ is  the so-called {\it Jacobi modular group} 
$\Gamma^J(L)\cong \SL_2(\ZZ)\rtimes H(L)$ where $\SL_2(\ZZ)$ acts on the isotropic plane 
$F$ and  $H(L)$ is the Heisenberg group 
acting trivially on the totally isotropic plane $F$.
The group $H(L)$ is a central extension of $L\times L$ and any element $h\in H(L)$ 
can be written in the form
$h=[\lambda,\mu;\kappa]$ where $\lambda$, $\mu\in L$, 
$\kappa\in \frac{1}{2}\ZZ$ and $\kappa+\frac{1}{2}(\lambda,\mu)\in \ZZ$.
We note that $P_F$  is the product of the Jacobi group and the finite
orthogonal group of the positive definite lattice $L$.
Analyzing the holomorphic function $\varphi(\tau, \gz)e^{2\pi i m \omega}$ 
on $\cH(L)$  
modular with respect to  the  semi-simple part 
$\SL_2(\ZZ)$ and the unipotent part $H(L)$ 
of  the Jacobi group we obtain the following definition.
\begin{definition}
A holomorphic (resp. cusp or  weak) Jacobi form of weight $k\in \ZZ$ 
and index $m\in \NN$
for  $L$ is  a holomorphic function
$$
\phi: \HH\times (L\otimes \Bbb C)\to \CC
$$
satisfying the functional equations
\begin{align*}\label{def-JF}
\phi(\frac{a\tau+b}{c\tau+d},\,\frac{\gz}{c\tau+d})&
=(c\tau+d)^{k}\exp\bigl (\pi i \frac{cm(\gz,\gz)}{c\tau+d}\bigr)
\phi(\tau ,\,\gz ),
\\\vspace{2\jot}
\phi(\tau,\gz+\lambda\tau+\mu)&
=\exp\bigl(-\pi i m\bigl( (\lambda,\lambda)\tau+ 2(\lambda,\gz)\bigr)\bigr)
\phi(\tau,\gz )
\end{align*}
for any
$A=\begin{pmatrix}
a&b\\c&d
\end{pmatrix}\in{\SL}_2(\Bbb Z)$ and any
$\lambda,\,\mu\in L$
and having a Fourier expansion
$$
\phi(\tau ,\,\gz )=
\sum_{n\in \ZZ,\ \ell \in L^\vee}
f(n,\ell)\,\exp\bigl(2\pi i (n\tau+(\ell,\gz) \bigr),
$$
where $n\ge 0$ for a {\bf weak} Jacobi form,
$N_m(n,\ell)=2nm-(\ell,\ell)\ge 0$ for a {\bf holomorphic} Jacobi form
and $N_m(n,\ell)>0$ for a {\bf cusp} form. 
\end{definition}

We denote the space of all holomorphic Jacobi forms by
$J_{k,m}(L)$. We use the  notation $J_{k,m}^c(L)$, $J_{k,m}^w(L)$
and $J_{k,m}^{wh}(L)$
for the space of cusp, weak or weakly holomorphic  Jacobi forms.
$\varphi$ is called {\it weakly holomorphic} if there exists $N$ such that 
$\Delta^N(\tau)\varphi(\tau,\gz)\in J_{k,m}^w(L)$. 

If $J_{k,m}(L)\ne \{0\}$, then $k\ge \frac{1}2\rank L$ (see \cite{G1}).
The weight $k=\frac{1}2\rank L$ is called {\it singular}.
The denominator function of an affine Lie algebra is a holomorphic Jacobi 
form of singular weight (see \cite{Ka} and \cite{KP}).

It is known (see \cite[Lemma 2.1]{G2}) that
$f(n,\ell)$ depends only on the hyperbolic norm
$N_m(n,\ell)=2nm-(\ell,\ell)$ and the image of $\ell$
in the discriminant group $D(L(m))=L^\vee/mL$
($L(m)$ denotes the rescaling of the  lattice $L$ by $m$).
Moreover,
$f(n,\ell)=(-1)^kf(n,-\ell)$. 
We note that  $J_{k,m}(L)=J_{k,1}(L(m))$
and the space $J_{k,m}(L)$ depends essentially only on the discriminant form
of $L(m)$ (see \cite[Lemma 2.4]{G2}).
\smallskip

{\bf Example 1.} {\it Jacobi theta-series of singular weight} 
(see \cite{G1}--\cite{G2}).
Let $L$ be an even unimodular lattice of rank $n\equiv 0 \mod 8$. Then 
\begin{equation*}\label{eq-Jth}
\vartheta_{L}(\tau, \mathfrak{z})=
\sum_{\ell\in L}\exp{\bigl(\pi i (\ell,\ell)\tau+2\pi i (\ell, \mathfrak{z} )\bigr)}
\in J_{\frac{n}2,1}(L).
\end{equation*}

One can also  define Jacobi forms of integral or half-integral weights 
with a character (or multiplier system) of  fractional  index
(see \cite{GN4}, \cite{CG2}). 
 
A Jacobi form determines a vector valued modular form
related to the corresponding Weil representation (see \cite[Lemma 2.3]{G2}).
In particular, for $\varphi\in J_{k,1}(L)$ we have
\begin{equation}\label{vvmf}
\varphi(\tau ,\,\gz)=\sum_{\substack{  n\in \ZZ,\ \ell \in L^\vee
\vspace{0.5\jot} \\
 2n-(\ell,\ell)\ge 0}}
f(n,\ell)\,\exp\bigl(2\pi i (n\tau+(\ell,\gz) \bigr)
=\sum_{h\in D(L) } \phi_h(\tau)\Theta_{L,h}(\tau, \gz),
\end{equation}
where $h\in D(L)=L^\vee/L$, $\Theta_{L,h}(\tau, \gz)$ is the Jacobi theta-series with characteristic
$h$ and the components of the vector valued modular forms
$(\phi_h)_{h\in D(L)}$ have the following Fourier expansions at infinity
$$
\phi_h(\tau)=\sum_{\substack{r\equiv -\frac{1}{2}(h,h)\mod \ZZ}}
f_h(r)
\exp{(2\pi i\, r\tau)}
$$
with
$f_h(r)=f(r+\frac{1}{2}(h,h),h)$. 
We note that a vector valued modular form
depends on the genus of the lattice $L$, i.e. on the discriminant group $D(L)$.
A Jacobi form contains
information on the class of $L$. We shall use this property
in order to prove Theorem 1.2. Moreover the Jacobi forms  
build a natural  bigraded ring  with respect to weights and indices. 
This fact is  useful for many constructions (see \cite{G3} and \cite{GN4}).
\smallskip

{\bf Example 2.}
If $L=A_1=\langle2\rangle$, then $J_{k,m}(A_1)=J_{k,m}$
is the space of holomorphic Jacobi modular forms of Eichler--Zagier type
studied in the book \cite{EZ}. The isomorphism 
$J_{k,m}\cong J_{k,1}(\langle -2m\rangle)$ was one of the main starting points
for the construction of the Jacobi lifting for the paramodular Siegel groups
in \cite{G2}. 
\smallskip

In the context of Borcherds products, 
reflective Siegel  modular forms and
the corresponding Lorentzian Kac-Moody algebras
(see \cite{GN1}--\cite{GN5}), the basis Jacobi form 
is the Jacobi theta-series 
$$
\vartheta(\tau,z)\in J_{\frac{1}2, \frac{1}2}(v^3_\eta\times v_H)
\qquad ({\rm see\ }\  (\ref{theta}))
$$
which is the  Jacobi form of weight $\frac{1}2$ and index $\frac 1{2}$
with multiplier system $v_\eta^3\times v_H$,
where $v_\eta$ is the multiplier
system of order $24$ of the Dedekind eta-function $\eta$
and  $v_H$ is the binary character of the Heisenberg group
(see \cite[Example 1.5]{GN4}).
In particular,  we have 
$$
\vartheta\bigl(\frac{a\tau+b}{c\tau+d},\,\frac{z}{c\tau+d}\bigr)
=v_\eta^3(M)(c\tau+d)^{1/2}\exp(-\pi i \frac{cz^2}{c\tau+d})
\vartheta(\tau ,\,z )
$$
for all $M=\left(\begin{smallmatrix}
 a&b\\c&d
\end{smallmatrix}\right) \in \SL_2(\ZZ)$  and 
$$
\vartheta(\tau ,z+\lambda \tau +\mu)=
(-1)^{\lambda+\mu}\exp{(-\pi i\,(\lambda^2 \tau  +2\lambda z))}\,
\vartheta(\tau , z)\quad (\lambda,\mu \in \ZZ).
$$
The next proposition shows
the role of $\vartheta(\tau ,\,z)$ in the context of this paper.
\begin{proposition}\label{prop-JObst}
Let 
$$
\varphi(\tau,\gz)=\sum_{n\in \ZZ,\,\ell\in L^\vee} f(n,\ell)q^nr^\ell
\in J_{0,1}^{wh}(L)
$$
be a weakly holomorphic Jacobi form of weight $0$ and index $1$ for the lattice $L$.
The Fourier coefficients $f(0,\ell)$ determine a generalized  $2$-designe in $L^\vee$.
More exactly the following identity is valid
\begin{equation}\label{eq-2des}
\sum_{\ell\in L^\vee} f(0,\ell)(\ell, \gz)^2=2C(\gz,\gz),
\qquad \forall\,\gz\in L\otimes \CC
\end{equation}
where 
$$
C=\frac{1}{24}\sum_{\ell\in L^\vee}f(0,\ell)
-\sum_{n>0,\,\ell\in L^\vee} f(-n,\ell)\sigma_1(n)
=\frac{1}{2\rank L}\sum_{\ell\in L^\vee} f(0,\ell)(\ell,\ell).
$$
\end{proposition}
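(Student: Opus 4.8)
The plan is to extract the identity \eqref{eq-2des} from the transformation behaviour of $\varphi$ under the Heisenberg group together with the fact that the $q^0$-term of $\varphi$ is a finite sum (since $\varphi$ is weakly holomorphic of weight $0$, only finitely many $f(-n,\ell)$ with $n>0$ are non-zero, and for fixed $n$ the $\ell$ with $2n-(\ell,\ell)\ge 0$ form a finite set, so the same holds for $n=0$). Write $\varphi(\tau,\gz)=\sum_{n}\varphi_n(\gz)q^n$ with $\varphi_0(\gz)=\sum_{\ell\in L^\vee}f(0,\ell)r^\ell$ a finite exponential sum, i.e. a genuine holomorphic function on $L\otimes\CC$. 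The index-$1$ elliptic transformation law, read off the definition with $m=1$, says
$$
\varphi(\tau,\gz+\lambda\tau+\mu)=\exp\bigl(-\pi i\,((\lambda,\lambda)\tau+2(\lambda,\gz))\bigr)\varphi(\tau,\gz)
$$
for all $\lambda,\mu\in L$. The strategy is to compare the $q$-expansions of both sides and isolate a low-order coefficient; the $\mu$-periodicity is automatic for $\varphi_0$, so the content is in the $\lambda$-shift.

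First I would substitute $\gz\mapsto\gz+\lambda\tau$ and track powers of $q$. On the left, $\varphi(\tau,\gz+\lambda\tau)=\sum_n\varphi_n(\gz)q^n e^{2\pi i n(\lambda,?)}$ is not quite of that shape; more precisely each Fourier monomial $f(n,\ell)q^n r^{\ell}$ becomes $f(n,\ell)q^{\,n+(\ell,\lambda)}r^{\ell}$, while the right-hand prefactor contributes $q^{-(\lambda,\lambda)/2}r^{-\lambda}$. Matching the coefficient of $q^0$ on both sides — using that the lowest power of $q$ occurring is bounded below — yields a relation among the $f(0,\ell)$ and the $f(-n,\ell)$ with $n>0$. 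Concretely, choosing $\lambda$ small (or working formally with $\lambda$ as a parameter and differentiating), the identity forces $\varphi_0(\gz+\lambda\cdot 0)$-type cancellations that, after expanding $\exp(-2\pi i(\lambda,\gz))$ to second order in $\lambda$, produce exactly a quadratic form in $\gz$ on the right: this is where the $2C(\gz,\gz)$ comes from. The cleanest route is probably to take the theta-decomposition \eqref{vvmf}, note that the vector-valued form $(\phi_h)$ has weight $-\tfrac12\rank L<0$ hence its ``holomorphic part'' obstruction is governed by the weight-$2$ Eisenstein-type pairing, and invoke the standard fact that $\sum f(0,\ell)\,e^{2\pi i(\ell,\gz)}$ modular of weight $0$ index $1$ must have vanishing ``elliptic variable Laplacian defect'' — equivalently, apply the heat operator $8\pi i\,\partial_\tau-\Delta_{\gz}$ (the index-$1$ Casimir), whose action kills the non-constant-in-$\tau$ part and leaves precisely \eqref{eq-2des}.

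Here is the step list. (1) Reduce to $\varphi_0$: show the $q^0$-coefficient sum $\sum_\ell f(0,\ell)(\ell,\gz)^2$ is a finite sum and a polynomial identity in $\gz$ suffices. (2) Apply the elliptic transformation with a variable shift $\gz\mapsto\gz+\lambda\tau+\mu$, expand in $q$, and equate the coefficient of $q^0$; organise the contributions as: constant term $\sum f(0,\ell)$, linear-in-$\ell$ terms which vanish by $f(0,\ell)=f(0,-\ell)$ (weight $0$, even), and the quadratic terms $\sum f(0,\ell)(\ell,\lambda)^2$ on one side against $\sum_{n>0}f(-n,\ell)(\cdots)$ contributions from the shifted negative-index monomials on the other. (3) Identify the coefficient produced by the $n>0$ part as $\sum_{n>0,\ell}f(-n,\ell)\sigma_1(n)$ by summing the geometric-type series $\sum_{k\ge1}k\,q^{(\ell,\lambda)k}$ at the relevant order — this is the source of the divisor sum $\sigma_1$. (4) Combine to get $\sum_\ell f(0,\ell)(\ell,\gz)^2=2C(\gz,\gz)$ with $C=\tfrac1{24}\sum f(0,\ell)-\sum_{n>0}f(-n,\ell)\sigma_1(n)$, the $\tfrac1{24}$ entering through $\sigma_1$-regularisation (equivalently through $E_2$). (5) For the second formula for $C$, take the trace: apply the Laplacian $\Delta_{\gz}$ to both sides of \eqref{eq-2des}, or equivalently contract with an orthonormal basis of $L\otimes\RR$, giving $\sum_\ell f(0,\ell)(\ell,\ell)=2C\rank L$.

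The main obstacle is step (2)–(3): keeping careful track of which Fourier monomials of $\varphi$ can contribute to the $q^0$-coefficient after the shift $\gz\mapsto\gz+\lambda\tau$, and correctly accounting for the quasimodular correction $\tfrac1{24}$ (the $E_2$-anomaly) that turns a naive ``$\sigma_1$'' into the stated constant. I expect the slickest implementation is via the heat/Casimir operator on $J_{0,1}^{wh}(L)$ — applying it to $\varphi$ lands in a space of index-$1$ forms of weight $2$ whose theta-components are quasimodular of weight $2$, and the obstruction to genuine modularity is a constant multiple of $E_2$; reading off that constant on the $q^0$-level gives both $C$ and the $2$-design identity simultaneously. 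I would present the elementary $q$-expansion argument as the main proof and mention the operator viewpoint as the conceptual explanation.
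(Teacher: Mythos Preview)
Your main line of attack --- extracting (\ref{eq-2des}) from the elliptic (Heisenberg) transformation law by matching $q^0$-coefficients after the shift $\gz\mapsto\gz+\lambda\tau$ --- cannot work. That transformation law, read on Fourier coefficients, says exactly that $f(n,\ell)$ depends only on the hyperbolic norm $2n-(\ell,\ell)$ and on $\ell\bmod L$; it is satisfied identically by any Fourier series with that periodicity and carries no further constraint. In particular it produces neither the divisor sums $\sigma_1(n)$ nor the constant $\tfrac1{24}$: both are fingerprints of the quasi-modular Eisenstein series $G_2$, hence of the $\SL_2(\ZZ)$-action, which your steps (2)--(3) never invoke. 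The appeal to ``$\sigma_1$-regularisation'' in step (4) does not repair this.

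Your throwaway alternatives are much closer to a correct proof, and in fact to the paper's. The paper forms the \emph{automorphic correction} $\varphi_{cor}(\tau,\gz)=e^{-4\pi^2 G_2(\tau)(\gz,\gz)}\varphi(\tau,\gz)$, which is genuinely $\SL_2(\ZZ)$-invariant in $\tau$ (the $G_2$-factor absorbs the index-$1$ automorphy factor). The degree-$2$ Taylor coefficient of $\varphi_{cor}$ in $\gz$ is therefore a meromorphic $\SL_2(\ZZ)$-modular form of weight $2$; since every such form lies in the image of $D=q\,d/dq$ acting on weight-$0$ functions, its $q^0$-coefficient vanishes. Writing that coefficient out --- the term $\sum_\ell f(0,\ell)(\ell,\gz)^2$ coming from $\varphi$, together with the $G_2$-contributions $-\tfrac1{24}\sum_\ell f(0,\ell)$ and $\sum_{n>0,\ell}\sigma_1(n)f(-n,\ell)$, each paired with $2(\gz,\gz)$ --- gives exactly (\ref{eq-2des}) with the first formula for $C$. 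This is your ``heat operator / $E_2$-anomaly'' remark made precise, and it is the actual argument; promote it to the main proof and drop the elliptic-shift scheme. Your step (5), applying $\Delta_\gz$ to obtain the second expression for $C$, is correct and matches the paper.
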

\begin{proof}
We prove the proposition using the method of automorphic correction
of Jacobi forms
(see \cite[Lemma 1.10]{G3}, \cite[Proposition 1.5]{G4} and  \cite[(6)]{CG1}).
We consider the quasi-modular Eisenstein series of weight $2$
$$
G_2(\tau)=-D(\log (\eta(\tau)))=-\frac{1}{24}+\sum_{n\ge 1}\sigma _1(n)q^n,
\quad q=e^{2\pi i \tau},
\ 
D=q\frac{d\ }{dq}
$$
where $\sigma_k(m)=\sum_{d|m} d^k$.
We define {\it the automorphic correction} of the weakly holomorphic Jacobi form
$\varphi$ as follows
$
\varphi_{cor}(\tau,\gz)=e^{-4\pi ^2G_2(\tau)(\gz,\gz)}\varphi(\tau,\gz)
$. 
This function transforms like an automorphic function in $\tau$
$$
\varphi_{cor}(\frac{a\tau+b}{c\tau+d},\frac{\gz}{c\tau+d})=\varphi_{cor}(\tau,\gz).
$$
Therefore  the sum of the coefficients of order $2$ in the Taylor expansion 
of $\varphi_{cor}(\tau,\gz)$ in $\gz$ is   
a meromorphic $\SL_2(\ZZ)$-modular  form of weight $2$ in $\tau$.
It is easy to find its zeroth Fourier coefficient. It is equal to 
$$
(2\pi i)^2
\biggl(\sum_{\ell\in L^\vee}
f(0,\ell)(\ell,\gz)^2+2(\gz,\gz)\bigl[-\frac{1}{24}\sum_{\ell\in L^\vee}f(0,\ell)
+\sum_{n>0,\,\ell\in L^\vee} \sigma_1(n)f(-n,\ell)\bigr]
\biggr).
$$
The differential operator  $D$ maps   the space of modular  functions of weight 
$0$ onto  the space of meromorphic modular   forms of weight $2$. In particular 
the zeroth Fourier coefficient of a meromorphic $\SL_2(\ZZ)$-modular  form of weight $2$
is equal to zero.  It proves the identity (\ref{eq-2des}) 
with the first expression for $C$.
After that we can also find the second expression for $C$ 
acting by  the  Laplace operator 
on (\ref{eq-2des}) because 
$\Delta_\gz(\ell,\gz)^m=m(m-1)(\ell,\ell)(\ell,\gz)^{m-2}$
and $\Delta_\gz(\gz,\gz)=2\rank L$. 
\end{proof}

A finite multiset of  vectors $\{(\ell; m(\ell))\}$ from $L^\vee$
(one takes every vector $m(\ell)$ times) which satisfies 
(\ref{eq-2des}) is called {\it vector system} in \cite[\S 6]{B1}.
One can introduce positive  and negative elements ($\ell>0$ and $\ell<0$) 
in a vector system using the sign of the scalar product with a vector 
which is not orthogonal to any vectors in the system.
Then every element will be  either positive or negative.
If $L$ is even integral, 
then $s(L)$ (respectively, $n(L)$) denotes 
the positive generator of the integral ideal 
generated by $(\lambda,\mu)$  (respectively, by  $(\lambda,\lambda)$)
for $\lambda$ and $\mu$ in $L$.
 
\begin{corollary}\label{cor-Ptheta}
Let $\varphi$ be as in Proposition \ref{prop-JObst}.
We assume that all Fourier coefficients $f(0,\ell)$ are integral.
Then the function
$$
\psi_\varphi(\tau,\gz)=\prod_{\ell\in  L^\vee,\,\ell > 0}
\biggl(\frac{\vartheta(\tau,(\ell, \gz))}{\eta(\tau)}\biggr)^{f(0,\ell)}
$$
where the product is taken with respect to a fixed ordering in $L$
mentioned above,
transforms like Jacobi form of weight $0$ and index $C$ 
$$
C=\frac{1}{2\rank L}\sum_{\ell\in L^\vee} f(0,\ell)(\ell,\ell)
$$
for $L$ with a character. Let 
$\vec B=\frac 1{2}\sum_{\ell\in L^\vee,\,\ell>0}f(0,\ell)\ell\in \frac{1}2L^\vee$.
If 
$C\cdot n(L)\in 2\ZZ$, then  $C\cdot s(L)\in \ZZ$ and 
$\vec B\in L^\vee$.
\end{corollary}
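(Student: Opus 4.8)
The plan is to compute the automorphy of $\psi_\varphi$ directly from the transformation formulas for $\vartheta$ and $\eta$ recalled above, and then to feed the Fourier coefficients $f(0,\ell)$ into the design identity (\ref{eq-2des}) of Proposition~\ref{prop-JObst} in order to recognise the resulting automorphy factors as those of a Jacobi form of index $C$. (The product is finite: for a weakly holomorphic $\varphi$ the pole at the cusp has bounded order, so $f(0,\ell)\neq 0$ only for the finitely many $\ell\in L^\vee$ of bounded norm.) Since $\eta$ and $\vartheta$ both have weight $\tfrac12$, every factor $\vartheta(\tau,(\ell,\gz))/\eta(\tau)$ has weight $0$; for $M=\left(\begin{smallmatrix}a&b\\c&d\end{smallmatrix}\right)\in\SL_2(\ZZ)$ it acquires the factor $v_\eta(M)^{2}\exp(-\pi i\,c(\ell,\gz)^2/(c\tau+d))$, while for $\gz\mapsto\gz+\lambda\tau+\mu$ with $\lambda,\mu\in L$ (so $(\ell,\lambda),(\ell,\mu)\in\ZZ$, as $\ell\in L^\vee$) it acquires $(-1)^{(\ell,\lambda)+(\ell,\mu)}\exp(-\pi i((\ell,\lambda)^2\tau+2(\ell,\lambda)(\ell,\gz)))$. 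Raising to the power $f(0,\ell)$ and multiplying over $\ell>0$, the function $\psi_\varphi$ picks up under $M$ the factor $v_\eta(M)^{2N}\exp(-\pi i\,\tfrac{c}{c\tau+d}\sum_{\ell>0}f(0,\ell)(\ell,\gz)^2)$ and under $\gz\mapsto\gz+\lambda\tau+\mu$ the factor $(-1)^{2(\vec B,\lambda)+2(\vec B,\mu)}\exp(-\pi i\sum_{\ell>0}f(0,\ell)((\ell,\lambda)^2\tau+2(\ell,\lambda)(\ell,\gz)))$, where $N=\sum_{\ell>0}f(0,\ell)\in\ZZ$ and $\vec B=\tfrac12\sum_{\ell>0}f(0,\ell)\ell$.

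The next step is to collapse the quadratic sums. Because the weight is $0$ we have $f(0,-\ell)=f(0,\ell)$, so (\ref{eq-2des}) together with its polarisation (replace $\gz$ by $\gz+t\lambda$ and compare the linear term in $t$) yields $\sum_{\ell>0}f(0,\ell)(\ell,\gz)^2=C(\gz,\gz)$ and $\sum_{\ell>0}f(0,\ell)(\ell,\lambda)(\ell,\gz)=C(\lambda,\gz)$. Substituting these, the two factors become exactly $v_\eta(M)^{2N}\exp(-\pi i\,cC(\gz,\gz)/(c\tau+d))$ and $(-1)^{2(\vec B,\lambda)+2(\vec B,\mu)}\exp(-\pi i\,C((\lambda,\lambda)\tau+2(\lambda,\gz)))$, that is, the automorphy factors of a Jacobi form of weight $0$ and index $C$ for $L$, twisted by the character of the Jacobi group $\Gamma^J(L)$ that restricts to $v_\eta^{2N}$ on $\SL_2(\ZZ)$ and to the binary character $[\lambda,\mu;\kappa]\mapsto(-1)^{2(\vec B,\lambda)+2(\vec B,\mu)}$ on $H(L)$. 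This proves the first assertion.

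It remains to settle the two integrality claims. From $2(\lambda,\mu)=(\lambda+\mu,\lambda+\mu)-(\lambda,\lambda)-(\mu,\mu)$ one sees that $n(L)\mid 2s(L)$, so the hypothesis $C\cdot n(L)\in 2\ZZ$ forces $2C\,s(L)\in 2\ZZ$, hence $C\,s(L)\in\ZZ$. For $\vec B$, fix $\mu\in L$. Using $k\equiv k^2\pmod 2$ for every $k\in\ZZ$ and then (\ref{eq-2des}) at $\gz=\mu$,
\[
2(\vec B,\mu)=\sum_{\ell>0}f(0,\ell)(\ell,\mu)\equiv\sum_{\ell>0}f(0,\ell)(\ell,\mu)^2=C(\mu,\mu)\pmod 2 .
\]
Since $(\mu,\mu)\in n(L)\ZZ$ we get $C(\mu,\mu)=(C\,n(L))\cdot\frac{(\mu,\mu)}{n(L)}\in 2\ZZ$, so $2(\vec B,\mu)\in 2\ZZ$, i.e.\ $(\vec B,\mu)\in\ZZ$; as $\mu\in L$ was arbitrary this gives $\vec B\in L^\vee$.

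The only delicate point I anticipate is bookkeeping: correctly tracking the $\eta$-multiplier $v_\eta^{2N}$ and the sign character through the product, and matching the half-integral index convention used for $\vartheta$. The substantive input --- that the index of $\psi_\varphi$ is exactly $C=\tfrac1{2\rank L}\sum_{\ell}f(0,\ell)(\ell,\ell)$ rather than merely proportional to it, and that this same $C$ controls the parity forcing $\vec B\in L^\vee$ --- comes entirely from the design identity of Proposition~\ref{prop-JObst}, so no further idea is needed.
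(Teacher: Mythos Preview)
Your argument is correct and follows essentially the same route as the paper for the transformation law: both compute directly from the functional equations of $\vartheta$ and $\eta$ and then invoke the design identity (\ref{eq-2des}) together with its bilinear polarisation to collapse $\sum_{\ell>0}f(0,\ell)(\ell,\gz)^2$ and $\sum_{\ell>0}f(0,\ell)(\ell,\lambda)(\ell,\gz)$ to $C(\gz,\gz)$ and $C(\lambda,\gz)$. One bookkeeping remark: your Heisenberg character $[\lambda,\mu;\kappa]\mapsto(-1)^{2(\vec B,\lambda)+2(\vec B,\mu)}$ omits the central contribution $e^{2\pi i C\kappa}$; the paper records the full formula $\chi|_{H(L)}([\lambda,\mu;\kappa])=e^{\pi i C((\lambda,\lambda)+(\mu,\mu)-(\lambda,\mu)+2\kappa)}$, which you may want to include since $C$ need not be integral.

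Where you genuinely diverge from the paper is in the integrality statements. The paper simply cites \cite[Proposition~1.1]{CG2} for $C\cdot s(L)\in\ZZ$, and for $\vec B\in L^\vee$ argues indirectly by comparing two expressions for the Heisenberg character (the explicit formula above is visibly trivial on $H_s(L)$ once $C\cdot n(L)\in 2\ZZ$, hence so is $(-1)^{2(\vec B,\lambda+\mu)}$). Your proof is self-contained and more direct: the observation $n(L)\mid 2s(L)$ via $2(\lambda,\mu)=(\lambda+\mu)^2-\lambda^2-\mu^2$ immediately gives $C\cdot s(L)\in\ZZ$, and the parity trick $k\equiv k^2\pmod 2$ combined with (\ref{eq-2des}) at $\gz=\mu$ gives $2(\vec B,\mu)\equiv C(\mu,\mu)\equiv 0\pmod 2$. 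This avoids the external reference and is a clean elementary substitute.
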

\begin{proof}
Using the functional equations of $\vartheta(\tau,z)$
we  obtain that the theta-product 
transforms like a Jacobi form  of weight $0$ and  index $C$
with a character 
if an only if  the system $\{(\ell, f(0,\ell))\}$ 
satisfies  (\ref{eq-2des}).
This is clear for $\SL_2(\ZZ)$-transformations.
In order to prove the same for the abelian translations
$\gz\to \gz+\lambda \tau +\mu$ one uses the bilinear variant of (\ref{eq-2des})
$$
\sum_{\ell\in L^\vee,\, \ell>0}
f(0,\ell)(\ell, \gz_1)(\ell,\gz_2)=C(\gz_1,\gz_2),
\qquad \forall\,\gz_1,\gz_1\in L\otimes \CC.
$$
For any $\lambda$, $\mu\in L$ we have 
$$
\psi_\varphi(\tau,\gz+\lambda \tau +\mu)=
(-1)^{\sum_{\ell>0} f(0,\ell)(\ell,\lambda+\mu)}
e^{-\pi i C( (\lambda,\lambda)\tau+ 2(\lambda,\gz))}
\psi_\varphi(\tau,\gz).
$$
Therefore the restriction of the Jacobi character $\chi$ of  $\psi_\varphi$
to  the minimal Heisenberg subgroup
$H_s(L)$  generated by the elements in $L\times L$ is a binary character.
In fact one can prove (see \cite[\S 1]{CG2}) that
$$ 
\chi|_{H(L)}([\lambda,\mu;\kappa])=
e^{\pi i C((\lambda,\lambda)+(\mu,\mu)-(\lambda,\mu)+2\kappa)},
\quad [\lambda, \mu; \kappa] \in H(L).
$$
The properties of  the index $C$ follow from \cite[Proposition 1.1]{CG2}.
The property of $\vec B$ reflects the fact that the binary character 
of $H_s(L)$ is trivial, if $C\cdot n(L)\in 2\ZZ$.
The $\SL_2$-part of the Jacobi character is equal 
to $v_\eta^{2A'}$ where
$A'=\sum_{\ell\in L^\vee , \,\ell>0} f(0,\ell)$.

\end{proof}

\noindent
{\bf Remark.} 
The theta-product of the corollary is equal to the product function $\psi$
on the page 183 of \cite{B1}. In particular, 
Corollary \ref{cor-Ptheta} gives another simple  proof 
of  \cite[Theorem 6.5]{B1}. Our proof does not use the Poisson summation formula.

\section{Borcherds products and Jacobi forms}

In this section, we write Borcherds automorphic products
on $\Orth^+(L_2)$ 
in terms of Jacobi modular forms  of weight $0$
for the even  positive definite  lattice $L$.
In \cite{B2}, the language of   vector valued automorphic forms
was used.  The main  theorem of this section  is a natural  generalization 
of \cite[Theorem 2.1]{GN4} where 
Siegel modular forms with respect to  the symplectic paramodular group $\Gamma_t$
were considered.  This subject is very   natural. It was given in my
lecture course for Ph.D. students in Lille in 2002/03 without publishing 
a separate paper (see the chapter 4 in the  dissertation of C.~Desreumaux (2004)
where the corresponding formulations were used).
New applications of Borcherds products 
in algebraic geometry (see \cite{GHS1}--\cite{GHS2}, \cite{G5} and \cite{GH}),
in string theory (see \cite{CG1} and the references there)
and to the classification theory of Lorentzian Kac--Moody algebras
make this subject  actual again. 

We recall the definition of the stable orthogonal group
$$
\widetilde{\Orth}^+(L)=\{g\in \Orth^+(L)\,|\,g|_{L^\vee/L}=\id\}.
$$  

\begin {theorem}\label{product}
Let
$$
\varphi(\tau,\gz)=\sum_{n\in \ZZ,\,\ell\in L^\vee} f(n,\ell)q^nr^\ell
\in J_{0,1}^{wh}(L)
$$
be a weakly holomorphic Jacobi form of weight $0$
and index $1$ for an even integral positive definite lattice $L$.
We fix an ordering ($>0$ and $<0$) in the lattice $L$ like in 
Corollary \ref{cor-Ptheta}.
Assume that 
$f(n,\ell)\in \ZZ$ if  $2n-\ell^2\le 0$.   
Then, the product
$$
\cB_\varphi(Z)=q^{A}r^{\vec B}s^{C}
\prod_{\substack{n,m\in \ZZ,\,\ell\in L^\vee \vspace {1pt} \\ (n,\ell,m)>0}}
\bigl(1-q^nr^\ell s^{m}\bigr)^{f(nm,\ell)},
$$
where $Z=(\tau, \gz,\omega)\in \cH(L)$,
$q=\exp{(2\pi i \tau)}$, $r^\ell=\exp{(2\pi i (\ell, \gz))}$, 
$s=\exp{(2\pi i \omega)}$,
$(n,\ell,m)>0$ means  that $m>0$, or 
$m=0$ and $n>0$, or $m=n=0$ and  $\ell<0$, 
$$
A=\frac{1}{24}\sum_{\ell\in L^\vee} f(0,\ell),\ \
\vec B=\frac{1}{2}\sum_{\ell >0 }
f(0,\ell)\ell\in \frac{1}2L^\vee,\ \ 
C=\frac{1}{2\rank L}\sum_{\ell \in L^\vee}
f(0,\ell)(\ell,\ell)
$$
defines a meromorphic modular  form of weight
$
k=\frac{1}{2}f(0,0)
$
with respect to $\widetilde\Orth^+(L_2)$ with a character
$\chi$ (see (\ref{eq-char}) below). 
The poles and zeros of $\cB_\varphi$ lie on the rational
quadratic divisors defined by the Fourier coefficients
$f(n,\ell)$ with   $2n-\ell^2< 0$.
In particular, $\cB_\varphi$ is holomorphic if all  such coefficients are positive.
The explicit formula for the multiplicities is given in (\ref{eq-mult}).
\end{theorem}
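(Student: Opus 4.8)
The plan is to reduce the statement to the vector-valued Borcherds product theorem of \cite{B2} by using the dictionary (\ref{vvmf}) between Jacobi forms of index $1$ for $L$ and vector-valued modular forms for the Weil representation attached to the discriminant form $D(L)$. First I would fix the standard model $L_2=U\oplus U_1\oplus L(-1)$ and the tube-domain realization $\cH(L)$ from \S 1, so that a weight-$0$ index-$1$ Jacobi form $\varphi$ on $\cH(L)$ corresponds, via $\varphi=\sum_{h\in D(L)}\phi_h\Theta_{L,h}$, to a weakly holomorphic vector-valued modular form $(\phi_h)$ of weight $-\tfrac12\rank L$ for the Weil representation of $\SL_2(\ZZ)$ on $\CC[D(L)]$; the Fourier coefficients match by $f_h(r)=f(r+\tfrac12(h,h),h)$, which is exactly the input data Borcherds' construction needs. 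The signature condition in \cite{B2} is met because $L_2$ has signature $(2,\rank L)$ and the principal part of $(\phi_h)$ is governed precisely by the coefficients $f(n,\ell)$ with $2n-\ell^2<0$.

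Next I would translate the output. Borcherds' product is indexed by $\lambda$ in a positive cone of $L_1=U_1\oplus L(-1)$ with exponents given by the coefficient of $(\phi_h)$ attached to the class and norm of $\lambda$; writing $\lambda=(n,\ell,m)$ in coordinates dual to $(e_1,\gz,f_1)$ and using the index-$1$ relation, the exponent becomes $f(nm,\ell)$, the cone condition becomes the stated $(n,\ell,m)>0$, and the Weyl vector $(A,\vec B,C)$ is read off from Borcherds' formula for the Weyl vector as a sum over the positive norm-$0$ and relevant vectors — this is where Proposition \ref{prop-JObst} enters, since the identity (\ref{eq-2des}) is exactly the statement that $\vec B$ and $C$ assemble into a genuine vector in $L_2\otimes\QQ$ and that the quadratic obstruction to modularity vanishes (equivalently, $C$ has the two closed-form expressions given). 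The weight is $\tfrac12$ times the coefficient of the trivial class at the origin, i.e. $\tfrac12 f(0,0)$, and the character $\chi$ is the one \cite{B2} attaches to the product; I would record it as (\ref{eq-char}) by combining the $\eta$-multiplier contribution $v_\eta^{24A}$ (equivalently $v_\eta^{2A'}$ with $A'=\sum_{\ell>0}f(0,\ell)$, cf. Corollary \ref{cor-Ptheta}) with the Heisenberg character computed there. The divisor statement — poles and zeros supported on $\cD_v$ for $v\in L_2^\vee$ with $2n-\ell^2<0$, with multiplicity (\ref{eq-mult}) given by $\sum_{d>0}f(d^2 n, d\ell)$ summed over $v$ in the $\widetilde\Orth^+(L_2)$-orbit — is the direct transcription of Borcherds' divisor formula, and holomorphy when all such $f(n,\ell)\ge 0$ is immediate.

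The main obstacle is bookkeeping rather than conceptual: one must check carefully that the coordinate change from the vector-valued setting (sums over $L_1$ with Weil-representation indices) to the Jacobi setting (the triple $(n,\ell,m)$ with exponent $f(nm,\ell)$) is an exact bijection preserving the cone and the multiplicities, and that the Weyl vector components produced by \cite{B2} — which are a priori certain regularized sums — coincide with the closed forms for $A$, $\vec B$, $C$ given in the statement. The first identification is a standard but fiddly unfolding; the second is precisely controlled by Proposition \ref{prop-JObst}, whose formula for $C$ and the $2$-design identity (\ref{eq-2des}) are exactly the two facts needed. The only remaining point requiring care is the modular group: \cite{B2} gives a form for the full (stable extension of the) orthogonal group of $L_2$, and one has to observe that $\widetilde\Orth^+(L_2)$ acts trivially on $D(L_2)=D(L)$ so that the character and the product are well defined on it; this is automatic since $L_2$ is built from $L$ and two unimodular planes, so $D(L_2)\cong D(L)$ canonically.
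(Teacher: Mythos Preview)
Your reduction to \cite[Theorem~13.3]{B2} via the dictionary (\ref{vvmf}) is exactly how the paper begins, and your translation of the product side (exponents $f(nm,\ell)$, the cone condition on $(n,\ell,m)$, the divisor formula (\ref{eq-mult}) via the Eichler criterion) matches the paper. Where you diverge is in how you extract the invariants. You propose to read the Weyl vector and the character off from Borcherds' general formulas and then use Proposition~\ref{prop-JObst} to evaluate the regularized sums in closed form. The paper instead derives everything by a direct Hecke-type decomposition: it splits the product as
\[
\cB_\varphi(Z)=\widetilde\psi_{L;C}(Z)\,\exp\Bigl(-\sum_{m\ge 1}m^{-1}\,\widetilde\varphi\,|\,T_-(m)(Z)\Bigr),
\]
where the first factor is the theta-quotient $\eta^{f(0,0)}\prod_{\ell>0}(\vartheta/\eta)^{f(0,\ell)}s^C$ (a Jacobi form of weight $\tfrac12 f(0,0)$ and index $C$ by Corollary~\ref{cor-Ptheta}) and the exponential is $\Gamma^J(L)$-invariant of weight $0$. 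This gives $\Gamma^J(L)$-modularity and the parabolic part of $\chi$ without touching Borcherds' regularized Weyl-vector integrals; then the paper finishes by a bare-hands computation of $\cB_\varphi(V\langle Z\rangle)/\cB_\varphi(Z)=(-1)^D$ for the involution $V\colon(\tau,\gz,\omega)\mapsto(\omega,\gz,\tau)$, using the alternative formula for $C$ in Proposition~\ref{prop-JObst} to collapse the telescoping product. Your outline is correct, but it leaves the value $\chi(V)=(-1)^D$ unaccounted for (you only describe the $\SL_2$ and Heisenberg pieces), and this is precisely the place where one either does the paper's direct computation or unwinds a nontrivial piece of \cite{B2}. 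The payoff of the paper's route is that the Hecke representation above is not just a device for the proof: it is the content of Corollary~\ref{cor-FJexp} and drives the Fourier--Jacobi expansions used later for $\Phi_{12}$, so it is worth obtaining explicitly rather than as a byproduct of translation.
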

\begin{proof}

The product of the theorem is a special case  of the Borcherds automorphic products
considered in \cite[Theorem 13.3]{B2} because the Jacobi form $\varphi$
determines a vector valued modular form of weight $-\frac{\rank L}2$ 
according (\ref{vvmf}).
The product converges if $Y=\Im Z$ with 
$(Y, Y)>M$ for a sufficiently large $M$ lies in a fundamental 
domain of the hyperbolic  orthogonal group $\widetilde\Orth^+(L_1)$ 
acting on the cone $V^+(L_1)$.

We define the invariants of the automorphic products
(the modular  group, the weight, the character,
the multiplicities of the poles and the zeros, 
the first several members of the  Fourier-Jacobi expansions)
in terms of the Fourier coefficients of the Jacobi form
$\varphi$ using a Hecke type representation of the Borcherds products 
given in  \cite{GN4}.

The choice of the vector  $(A, \vec B, C)$
in the definition 
of $\cB_\varphi$ is motivated  by the following identity
$$
q^{A}r^{\vec B}s^{C}
\prod_{(n,\ell,0)>0}
(1-q^nr^\ell)^{f(0,\ell)}=
\eta(\tau)^{f(0,0)}
\prod_{\ell>0}
\biggl(\frac{\vartheta(\tau,(\ell, \gz))e^{ \frac {2\pi i}{\rank L}(\ell,\ell)\omega}}
{\eta(\tau)}\biggr)^{f(0,\ell)}.
$$
The vector $(A, \vec B, C)$ is called usually 
{\it Weyl vector} of the Borcherds product.
According to Proposition \ref{prop-JObst} and Corollary \ref{cor-Ptheta}
$$
\Tilde\psi_{L;C}(Z)=
\eta(\tau)^{f(0,0)}
\prod_{\ell>0}
\biggl(\frac{\vartheta(\tau,(\ell, \gz))}
{\eta(\tau)}\biggr)^{f(0,\ell)}e^{2\pi i C\omega}
$$
is a (meromorphic) Jacobi form of weight $k=\frac{f(0,0)}2$ 
and index $C$ for the lattice $L$ with a character 
founded in Corollary 2.3 times $v_\eta^{f(0,0)}$.

Next we consider the second term of $\cB_\varphi$ containing the factors with $m>0$  
$$
\hbox{log\,}\biggl(
\prod_{ (n,\ell,m),\, m>0}
(1-q^nr^\ell s^{m})^{f(nm,\ell)}\biggr)=
-\sum_{(n,\ell,\,m)> 0}
f(nm,\ell)\ \sum_{e\ge 1}\frac 1{e}\,q^{en}r^{e\ell}s^{em}=
$$
$$
-\sum_{(a,\vec b,c)>0}
\ \bigl(\sum_{d|(a,\vec b,c)}
d^{-1}f(\frac{ac}{d^2},\,\frac{\vec b}{d})\bigr)
\,q^ar^{\vec b} s^{c}=
 -\sum_{m\ge 1}
m^{-1}\widetilde{\varphi}\,|\, T_-(m)(Z)
$$
where $T_-(m)$ is  the Hecke operator defined in 
\cite{G1} and \cite[(2.12)]{G2}. $T_-(m)$ is a Hecke operator of type $V_m$
from  \cite{EZ} and it  multiplies the index
of Jacobi modular forms by $m$ (see \cite[Corollary 2.9]{G2}).
Therefore we have the following representation of the Borcherds product  
\begin{equation}\label{eq-BHecke}
\cB_{\varphi}(Z)=\widetilde\psi_{L;C}(Z)\exp{\bigl( -\sum_{m\ge 1}
m^{-1}\widetilde{\varphi}| T_-(m)(Z)\bigr)}
\end{equation}
which is similar to \cite[(2.7)]{GN4} where we considered the case of
Siegel modular forms. 
Therefore $\cB_\varphi$ transforms like a modular form of weight $k=\frac{f(0,0)}2$
with respect to the Jacobi modular group $\Gamma^J(L)<\widetilde\Orth^+(L_2)$. 

We know (see \cite[\S 3]{G2}) that 
$\widetilde\Orth^+(L_2)=\langle \Gamma^J(L), V\rangle$ where 
$V : (\tau,\gz, \omega)\to (\omega,\gz, \tau)$.
We can  analyze  the behavior of  the Borcherds product under the  $V$-action
using Proposition 2.2.
A straightforward calculation shows that
$$
\frac{ \cB_\varphi(V\langle Z \rangle)}{\cB_\varphi(Z)}=
\frac{q^Cs^A}{q^As^C}
\prod_{n>0,\,m>0,\,\ell\in L^\vee}
\biggl(\frac{1-s^{-n}r^\ell q^m}{1-q^{-n}r^\ell s^m}\biggr)^{f(-nm,\ell)}.
$$
We note that $f(-nm,\ell)=f(-nm,-\ell)$. Therefore the last product is equal to 
$$
=\frac{q^{C+\sum nf(-nm,\ell)}s^A}{q^As^{C+\sum nf(-nm,\ell)}}
\prod_{n>0,\,m>0,\,\ell\in L^\vee}
\biggl(\frac{s^nr^{-\ell}- q^m}{q^n-r^\ell s^m}\biggr)^{f(-nm,\ell)}=
(-1)^{\sum f(-nm, 0)}
$$
according to the formulae for $C$ from Proposition  \ref{prop-JObst}. 
Therefore we have 
\begin{equation*}\label{eq-Banti}
\cB_\varphi(\tau, \gz,\omega)
=(-1)^{D}\cB_\varphi(\omega, \gz,\tau)\qquad{\rm where }\quad
D=\sum_{n<0} \sigma_0(-n)f(n,0).
\end{equation*}
This proves  that $\cB_\varphi$ transforms like  a modular form 
of weight $\frac{f(0,0)}2$ with respect to $\widetilde\Orth^+(L_2)$.
The character $\chi$ is induced by the $\Gamma^J(L)$-character 
of the Jacobi form $\psi_{L;C}$  and by the last relation  
\begin{equation}\label{eq-char}
\chi|_{\SL_2}=v_\eta^{24A},\ \  
\chi|_{H(L)}([\lambda,\mu;r])=
e^{\pi i C((\lambda,\lambda)+(\mu,\mu)-(\lambda,\mu)+2r)},
\ \ \chi(V)=(-1)^{D}. 
\end{equation}
Now we  calculate the multiplicities of the divisors. 
The Borcherds product $\cB_\varphi$ has an analytic continuation 
to $\cH(L)$ (see \cite[Theorem 5.1 and Theorem 10.1]{B1} 
and \cite[Theorem 13.3]{B2}). The singularities of $\cB_\varphi$
are the solutions  of the equations $(1-q^nr^\ell s^{m}\bigr)^{f(nm,\ell)}=0$
for  $(n, \ell, m)$ with  $2nm-(\ell,\ell)<0$ and $f(nm,\ell)\ne 0$.
The lattice $L$ contains two hyperbolic planes. 
According to the Eichler criterion (see \cite{G2}, \cite{GHS3})
the  $\widetilde\Orth^+(L_2)$-orbit of any  primitive vector $v\in L_2^\vee$
is uniquely determined by 
its norm $(v,v)$ and by its image 
$v\cong \ell \mod L_2$ in the discriminant group $L_2^\vee/L_2$.
Therefore  there exists 
$$
w=(0,n,\ell,1,0)\in \widetilde\Orth^+(L_2)v\quad{\rm such\  that\ }\ 
 (v,v)=2n-(\ell,\ell)<0, \quad  v\cong \ell \mod L_2.
$$
In particular,  $\widetilde\Orth^+(L_2)\cdot \cD_v=
\widetilde\Orth^+(L_2)\langle \{Z\in \cH(L)\,|\,n\tau+(\ell,\gz)+\omega=0\}\rangle.$
The Fourier coefficient $f(n,\ell)$ also  depends only on
the norm  $2n-\ell^2$ and $\ell$ modulo $L$.
Therefore the multiplicity along the divisor $\cD_v$ with 
the vector $v$ as above is equal to 
\begin{equation}\label{eq-mult}
{\rm mult}\  \cD_{v}=
 \sum_{\substack{d>0 \vspace{1pt}\\ 
 (v,v)=2n-(\ell,\ell)\vspace{1pt} \\
 v\equiv \ell \mod L_2}} f(d^2n, d\ell).
\end{equation}
\end{proof}

\begin{corollary}\label{cor-FJexp}
The representation (\ref{eq-BHecke}) gives the first terms
of the Fourier-Jacobi expansion of $\cB_\varphi$
at the one dimensional cusp defined by the lattice $L$
$$
B_\varphi(\tau,\gz,\omega)=
\psi(\tau,\gz)e^{2\pi i C\omega}
\biggl(1-\varphi(\tau,\gz)e^{2\pi i \omega}
+\frac{1}2\bigl(\varphi^2(\tau,\gz)-\varphi(\tau,\gz)|T_{-}(2)\bigr)e^{4\pi i \omega}
$$
\begin{equation*}
-\frac{1}6\bigl(\varphi^3(\tau,\gz)-3\varphi(\tau,\gz)\cdot 
\bigl(\varphi(\tau,\gz)|T_{-}(2)\bigr)
+2\varphi(\tau,\gz)|T_{-}(3)\bigr)e^{6\pi i \omega}
+\dots \biggr).
\end{equation*} 
\end{corollary}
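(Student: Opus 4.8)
The plan is to read off the Fourier--Jacobi coefficients directly from the Hecke-type representation (\ref{eq-BHecke}) by expanding the exponential factor as a power series in the cusp parameter $s=e^{2\pi i\omega}$. Recall that the Fourier--Jacobi expansion of $\cB_\varphi$ at the one-dimensional cusp attached to $L$ is precisely its expansion in powers of $s$, with each coefficient a Jacobi form in $(\tau,\gz)$. By (\ref{eq-BHecke}), together with the identification $\widetilde\psi_{L;C}(Z)=\psi(\tau,\gz)\,e^{2\pi i C\omega}$ established in the proof of Theorem \ref{product}, we have $\cB_\varphi(Z)=\psi(\tau,\gz)\,e^{2\pi i C\omega}\exp\bigl(-\sum_{m\ge 1}m^{-1}\widetilde\varphi\,|\,T_-(m)(Z)\bigr)$, so the whole problem reduces to expanding this exponential.

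First I would pin down the $s$-degree of each summand in the exponent. Since $\varphi$ has index $1$ and the operator $T_-(m)$ multiplies the index of a Jacobi form by $m$ (see \cite[Corollary 2.9]{G2}), the lifted term $\widetilde\varphi\,|\,T_-(m)(Z)$ carries exactly the single power $s^m$; concretely $\widetilde\varphi\,|\,T_-(m)(Z)=\bigl(\varphi|T_-(m)\bigr)(\tau,\gz)\,s^m$, and in particular $\widetilde\varphi\,|\,T_-(1)(Z)=\varphi(\tau,\gz)\,s$. Writing $\phi_m:=\varphi|T_-(m)$ so that $\phi_1=\varphi$, the exponent becomes the honest power series $-\phi_1 s-\tfrac12\phi_2 s^2-\tfrac13\phi_3 s^3-\cdots$ in $s$ with Jacobi-form coefficients.

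It then remains to expand $\exp\bigl(-\sum_{m\ge 1}m^{-1}\phi_m s^m\bigr)=\sum_{k\ge 0}\tfrac1{k!}\bigl(-\sum_m m^{-1}\phi_m s^m\bigr)^k$ and to collect the coefficients of $s^0,s^1,s^2,s^3$. This is a routine bookkeeping computation: the $s^0$-term is $1$; the $s^1$-term is $-\phi_1$; the $s^2$-term combines the linear contribution $-\tfrac12\phi_2$ with the quadratic one $\tfrac12\phi_1^2$ to give $\tfrac12\bigl(\varphi^2-\varphi|T_-(2)\bigr)$; and the $s^3$-term combines $-\tfrac13\phi_3$, $\tfrac12\phi_1\phi_2$ and $-\tfrac16\phi_1^3$ into $-\tfrac16\bigl(\varphi^3-3\varphi\cdot(\varphi|T_-(2))+2\varphi|T_-(3)\bigr)$. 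Multiplying back by the prefactor $\psi(\tau,\gz)\,e^{2\pi i C\omega}$ yields exactly the displayed expansion.

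There is essentially no deep obstacle here; the only point requiring care is conceptual rather than computational, namely the identification of the Fourier--Jacobi grading with the $s$-grading, together with the fact that $T_-(m)$ raises the $s$-degree by exactly $m$. Once this is granted the coefficients are forced by the exponential formula, and each coefficient is automatically a Jacobi form of index $C$ (up to the $\psi$-factor) plus the index contributed by the $\phi_m$ and their products, so no further modularity check is needed.
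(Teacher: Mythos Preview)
Your proposal is correct and is exactly the computation the paper has in mind: the corollary is stated without a separate proof precisely because it follows by expanding the exponential in (\ref{eq-BHecke}) as a power series in $s=e^{2\pi i\omega}$, using that $T_-(m)$ raises the index (hence the $s$-power) by $m$. Your bookkeeping of the $s^0,\dots,s^3$ coefficients is accurate and matches the displayed formula.
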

The next corollary is evident but it is very useful if one would like
to prove that a Jacobi (additive) lifting has a Borcherds product.
In this context this property  was often used in \cite{GN4}. 

\begin{corollary}\label{cor-FJcrit}
{\rm \bf The Fourier-Jacobi criterion for automorphic products.}
Let us assume that a modular form 
$$
F(\tau,\gz,\omega)=\varphi_m(\tau,\gz)e^{2\pi i m\omega}+
\varphi_{m+1}(\tau,\gz)e^{2\pi i (m+1)\omega}+\dots
$$
has Borcherds product expansion.  Then 
\begin{equation*}
F=\cB_\varphi\qquad {\rm where}\quad
\varphi(\tau,\gz)=-\frac {\varphi_{m+1}(\tau,\gz)}{\varphi_{m}(\tau,\gz)}.
\end{equation*}
\end{corollary}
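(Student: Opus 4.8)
The plan is to read off the two leading Fourier--Jacobi coefficients of $\cB_\varphi$ from Corollary \ref{cor-FJexp} and then solve for $\varphi$. By hypothesis $F$ admits a Borcherds product expansion, that is, $F=\cB_\varphi$ for some weakly holomorphic Jacobi form $\varphi$ of weight $0$ and index $1$ for $L$. Applying Corollary \ref{cor-FJexp} to this $\varphi$, the Fourier--Jacobi expansion of $\cB_\varphi$ at the one-dimensional cusp attached to $L$ begins
$$
\cB_\varphi(\tau,\gz,\omega)=\psi(\tau,\gz)\,e^{2\pi i C\omega}
-\psi(\tau,\gz)\varphi(\tau,\gz)\,e^{2\pi i(C+1)\omega}+\dots,
$$
so that its leading coefficient is $\psi$ in index $C$ and its next coefficient is $-\psi\varphi$ in index $C+1$.

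First I would invoke the uniqueness of the Fourier--Jacobi expansion: the coefficients in the $e^{2\pi i\omega}$-adic expansion of a given modular form are determined term by term. Comparing the prescribed expansion of $F$ with that of $\cB_\varphi=F$, one sees that both start at the same index, $m=C$, and that
$$
\varphi_m=\psi,\qquad \varphi_{m+1}=-\psi\varphi=-\varphi_m\,\varphi.
$$
Since the leading coefficient $\psi=\varphi_m$ is the explicit nonvanishing theta--eta product furnished by Corollary \ref{cor-Ptheta}, it is not identically zero, and I may divide to obtain
$$
\varphi=-\frac{\varphi_{m+1}}{\varphi_m},
$$
which is the asserted identity.

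The argument is essentially immediate once Corollary \ref{cor-FJexp} is available; the only point requiring a word is that $\varphi_m=\psi\not\equiv 0$, so that the quotient $-\varphi_{m+1}/\varphi_m$ is a well-defined weakly holomorphic Jacobi form. I do not expect any genuine obstacle here: the content of the criterion is merely that, once one knows a priori that $F$ is a Borcherds product, the underlying Jacobi form $\varphi$ --- and hence $F$ itself --- is completely pinned down by the ratio of the first two Fourier--Jacobi coefficients of $F$.
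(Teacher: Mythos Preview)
Your proof is correct and is exactly the argument the paper has in mind: the statement is introduced as ``evident'' from Corollary~\ref{cor-FJexp}, and you have simply written out that evident step --- match the two leading Fourier--Jacobi coefficients of $F=\cB_\varphi$ against $\psi$ and $-\psi\varphi$, then divide. The only detail you add beyond the paper is the remark that $\varphi_m=\psi\not\equiv 0$, which is harmless and indeed immediate since $\varphi_m$ is by hypothesis the leading (hence nonzero) coefficient of $F$.
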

\medskip

\noindent
{\bf 3.2. The Fourier--Jacobi expansions of the  Borcherds form $\Phi_{12}$.}
We prove Theorem 1.2 as a corollary of Theorem 2.1. 
Let $N(R)$ be the  Niemeier lattice with the root system $R$.
We put
$$
\varphi_{0,N}(\tau, \mathfrak z)
=\Delta(\tau)^{-1}\vartheta_{N(R)}(\tau, \mathfrak{z})
\in J_{0,1}^{wh}(N(R))
$$
where $\vartheta_{N(R)}$ is the Jacobi theta-series of the even unimodular 
lattice $N(R)$ (see Example 1 of \S 2). We have
$$ 
\varphi_{0,N}(\tau, \mathfrak z)=
\sum_{n\ge -1,\, \ell \in N(R)} f(n,\ell)q^nr^\ell=
q^{-1}+24+
\sum_{v\in R}e^{2\pi i (v, \mathfrak z)}+\dots.
$$
The hyperbolic norm of the index of any non-zero Fourier coefficient
of the Jacobi theta-series is equal to zero. Therefore, 
if $f(n,\ell)\ne 0$, then $2n-(\ell, \ell)\ge -2$. Moreover
if $2n-(\ell, \ell)=-2$, then $f(n,\ell)=1$ because 
$f(n,\ell)$ depends only on the norm of the index.
We note that all $-2$-vectors in the even unimodular lattice $2U\oplus N(R)(-1)$ build 
only one orbit with respect to the orthogonal group. 
According to Theorem 3.1 the Borcherds products $\cB_{\varphi_{0,N}}$  vanishes with 
order one along all rational quadratic divisors $\cD_v$ where 
$v\in 2U\oplus N(R)(-1)$ and $(v,v)=-2$.
Therefore $\cB_{\varphi_{0,N}}$ is equal, up to a constant, to $\Phi_{12}$
according to the Koecher principle.
We can use 
Corollary 3.2 in order to calculate the first two terms in the Fourier--Jacobi
expansion.
If $R=\emptyset$, then $N(\emptyset)=\Lambda_{24}$ and 
\begin{equation}\label{BLe} 
\cB_{\varphi_{0,\Lambda}}
=\Delta(\tau)-\vartheta_{\Lambda_{24}}(\tau, \mathfrak{z})e^{2\pi i \omega}+
\frac{1}2\bigl(
\vartheta_{\Lambda_{24}}\varphi_{0,\Lambda}
-\Delta(\varphi_{0,\Lambda}|T_{-}(2))\bigr)e^{4\pi i \omega} + \dots
\end{equation}
The last function is equal to $\Phi_{12}(Z)$ because the two modular forms 
have the same 
value at the one dimensional Leech cusp.
If $R$ is not empty then for $(\tau, \gz, \omega)\in \cH(N(R))$
\begin{equation}\label{BNR}
\cB_{\varphi_{0,N}}(\tau, \gz, \omega)=
\end{equation}
$$
 \Delta(\tau)\prod_{v\in R_{+}}
\frac{\vartheta(\tau, (v,\mathfrak{z}))}{\eta(\tau)}\,e^{2\pi i\, h(R)\,\omega}-
\vartheta_N(\tau, \mathfrak{z})\prod_{v\in R_+}
\frac{\vartheta(\tau, (v,\mathfrak{z}))}{\eta(\tau)}\,e^{2\pi i\, (h(R)+1)\,\omega}+
\dots
$$
where the product is taken over all positive roots $v$ in the finite root system $R$.
We note that rank $R=24$ and the first term is 
the  Weyl--Kac denominator function of the affine Lie algebra 
$\hat{\mathfrak g}(R)$. We note that the Fourier coefficient of the Weyl vector
$(A, \vec B, C)$ of the Borcherds product is equal to $1$.
Therefore $\cB_{\varphi_{0,N(R)}}=\pm \Phi_{12}$. The sign depends 
on the choice of the positive roots in the finite root system $R$.
Theorem 1.2 is proved.
\medskip

\noindent
{\bf 3.3. The Jacobi criterion for Borcherds products.} 
In this section we give an illustration of the Jacobi criterion formulated 
in Corollary 3.3. For this end we find the Borcherds product of
the reflective modular form $\Phi_2$  which is the 
``roof'' of the   {\it $4A_1$-tower of reflective modular forms} 
for the root lattices $A_1<2A_1<3A_1<4A_1$ (see \cite{G5}).
The last member of this tower for $A_1$ 
is the classical Igusa modular form  $\Delta_5$.
The details of the Jacobi construction of  $\Phi_2$
were given in \cite[Example 2.4]{CG2}.
The ``direct'' product  of the four theta-series is  
a Jacobi form of index $\frac{1}2$ for the lattice $L=4A_1$
with a character of order $2$
$$
\vartheta_{4A_1}(\tau, \mathfrak{z})
=\vartheta(\tau,z_1)\cdot ... \cdot\vartheta(\tau,z_4)
\in J_{2,\frac{1}2}(4A_1,\, v_\eta^{12}\times v_H(4A_1)).
$$
According to  \cite[Theorem 2.2]{CG2} we have
$$
\Phi_2(Z)=\Lift(\vartheta_{4A_1}(\tau, \mathfrak{z}))
\in M_2(\Orth^+(2U\oplus 4A_1(-1)),\chi_2)
$$
where $\chi_2$ is a character of order $2$ of the full orthogonal
group. 
This fundamental modular  form of singular weight has the following 
Fourier expansion
$$
\Phi_2(Z)=
\sum_{\substack{m\equiv 1 \bmod 2\\ m\geqslant 1}}
m^{-1}({\widetilde\vartheta_{4A_1}}\vert_2T_-^{(2)}(m))(Z)=
$$
$$
\sum_{\substack{ \ell=(l_1,\dots, l_4)\in 4A_1^\vee}}
\ \sum_{\substack{
 n,\,m\in \ZZ_{>0}\\
 \vspace{0.5\jot} n\equiv m\equiv 1\,{\rm mod\,}\ZZ\\
\vspace{0.5\jot}  2nm-(\ell,\ell)=0}}
\sigma_1((n,\ell,m))
\biggl(\frac{-4}{2l_1}\biggr)\dots \biggl(\frac{-4}{2l_4}\biggr)
\,e^{\pi i (n\tau+ (\ell,\mathfrak{z})+m\omega)}
$$
where  
$\sigma_1((n,\ell,m))$
is the sum of the positive divisors of the  greatest common divisor of the vector
$(n,\ell,m)\in (U\oplus A_1(-4))^\vee$. 
According \cite[Proposition 2.1]{CG2} and \cite[(1.27)]{GN4}
$$
3^{-1}\vartheta_{4A_1}\vert_2T_{-}(3)(\tau, \gz)=
3\vartheta_{4A_1}(3\tau,3\gz)
+\frac{1}3
\sum_{b=0}^{2}\vartheta_{4A_1}(\frac{\tau+2b}3,\gz).
$$
The quotient
$$
\phi_{0,\,4A_1}=-\frac{3^{-1}\vartheta_{4A_1}\vert_2\,T_{-}(3)}{\vartheta_{4A_1}}
\in J_{0,1}^w(4A_1)
$$
is a weak Jacobi form of weight $0$ and index $1$ for the lattice $4A_1$.
Its   Fourier coefficient  $f(n,\ell)$ 
depends only on $2n-\ell^2$ and $\ell \bmod 4A_1$.
Applying (\ref{vvmf}) to a week Jacobi form of index one for $4A_1$
we see that
if $f(n,\ell)\ne 0$, then  $2n-\ell^2\ge -2$.
Therefore,  in order to find all Fourier coefficients with negative 
hyperbolic norm of their indices we have to calculate only 
the $q^0$-part of the Fourier expansion of   $\phi_{0,\,4A_1}$.
The first term 
$\frac{\vartheta_{4A_1}(3\tau,3\gz)}{\vartheta_{4A_1}(\tau,\gz)}$
of the quotient 
contains coefficients  $q^n$ with positive $n$. The second term 
is equal, up to the sign, to  
$$
\frac{3^{-1}\sum_{b\bmod 3}\biggl(q^{\frac{1}2}\Prod_{n\ge 1}(1-q^n)^4
\Prod_{i=1}^{4}(1-q^nr_i)(1-q^nr_i^{-1})\biggr)\,\vert
\begin{pmatrix} 1&2b\\0&3
\end{pmatrix}}
{q^{\frac{1}2}\Prod_{n\ge 1}(1-q^n)^4
\Prod_{i=1}^{4}(1-q^nr_i)(1-q^nr_i^{-1})}
$$
where  $r_i=e^{2\pi i z_i}$ and 
the matrix denotes the action $\tau\to \frac{\tau+2b}{3}$.
The straightforward  calculation shows that
$$
\phi_{0, 4A_1}(\tau,\gz)=
r_1+r_2+r_3+r_4+4+r_1^{-1}+r_2^{-1}+r_3^{-1}+r_4^{-1}+q(\dots).
$$
This part of the  Fourier expansion contains all orbits of the 
Fourier coefficients with negative norm of its indices. 
Therefore the Borcherds product
$$
\cB_{\phi_{0, 4A_1}}\in M_2(\widetilde{\Orth}^+(2U\oplus 4A_1(-1)),\chi_2)
$$
vanishes with order $1$ along the rational quadratic divisors 
$\widetilde{\Orth}^+(2U\oplus 4A_1(-1))$-equivalent to $z_i=0$ ($1\le i\le 4$)
which are the walls of the reflections with respect to the $(-2)$-roots 
of $4A_1(-1)$.
The Jacobi lifting $\Phi_2$ vanishes along these divisors.
Therefore $\Phi_2/\cB_{\phi_{0, 4A_1}}$ is holomorphic.
Analyzing the first Fourier-Jacobi coefficients we get 
\begin{equation}\label{Phi2}
\Phi_2=\Lift(\vartheta_{4A_1})=\cB_{\phi_{0, 4A_1}}
\end{equation}
due to the Koecher principle.
$\Phi_2$ is a reflective modular form with the simplest possible divisor
in the sense of \cite{G5}. Taking its quasi pullbacks on the lattices
$A_1<2A_1<3A_1<4A_1$ we get three other reflective modular forms with respect  
to  ${\Orth}^+(2U\oplus mA_1(-1))$ for $m=3$, $2$ and $1$.
The last one (for $m=1$) is the Igusa modular form $\Delta_5$, i.e.
$$
\Phi_2(\tau,z_1,z_2,z_3,z_4,\omega)\,|^{(quasi\ pullback)}_{z_2=z_3=z_4=0}=
\Delta_5(\tau,z_1,\omega)\in S_5(\Sp_2(\ZZ),\chi_2).
$$ 
See \cite{G5} for more details where we analyzed also the towers
of reflective modular forms for  $D_1<\dots <D_8$ and $A_2<2A_2<3A_2$.
\medskip

\noindent
{\bf Acknowledgements:}
This work was supported by the grant ANR-09-BLAN-0104-01.
The author is grateful to the Max-Planck-Institut
f\"ur  Mathematik in Bonn for support and for providing excellent 
research atmosphere.

\bibliographystyle{alpha}

\bigskip
\noindent
University Lille 1\\
Laboratoire Paul Painlev\'e\\
F-59655 Villeneuve d'Ascq, Cedex\\
France\\
{\tt Valery.Gritsenko@math.univ-lille1.fr}
\end{document}